\newif\ifdraft
\newif\ifarxiv
\renewcommand{\itemautorefname}{\@gobble}
\theoremstyle{plain}
\newtheorem{theorem}{Theorem}[section]
\newtheorem*{theorem*}{Theorem}
\newtheorem{prop}[theorem]{Proposition}
\newtheorem{corollary}[theorem]{Corollary}
\newtheorem{lemma}[theorem]{Lemma}
\theoremstyle{definition}
\newtheorem{defn}[theorem]{Definition}
\newtheorem*{conj*}{Conjecture}
\newtheorem{remark}[theorem]{Remark}
\title{Hausdorff Stability of the Cut Locus Under $C^2$-Perturbations of the Metric}
\author{
	Aritra Bhowmick\thanks{
		Department of Mathematics, Indian Institute of Science, Bengaluru, India.
		\href{avowmix@gmail.com}{\texttt{avowmix@gmail.com}}, 
		\href{aritrab@iisc.ac.in}{\texttt{aritrab@iisc.ac.in}}
	}
	\and
	Jin-ichi Itoh\thanks{
		School of Education, Sugiyama Jogakuen University, Nagoya, Japan.
		\texttt{\href{mailto:j-itoh@sugiyama-u.ac.jp}{j-itoh@sugiyama-u.ac.jp}}
	}
	\and
	Sachchidanand Prasad\thanks{
		School of Mathematics, Jilin University, China;
		Mathematisches Institut, G\"ottingen University, G\"ottingen, Germany
		\texttt{\href{mailto:sachchidanand.prasad1729@gmail.com}{sachchidanand.prasad1729@gmail.com}}
	}
}
\newcommand{\setsubjclass}[1]{\def\thesubjclass{#1}}
\newcommand{\setkeywords}[1]{\def\thekeywords{#1}}
\newcommand{\printclassification}{%
	\renewcommand{\thefootnote}{}%
	\footnotetext{\textbf{2020 Mathematics Subject Classification.} \thesubjclass.}%
	\footnotetext{\textbf{Keywords.} \thekeywords.}%
	\renewcommand{\thefootnote}{\arabic{footnote}}%
}
\begin{document}
\allowdisplaybreaks
\setcounter{tocdepth}{3}
\frenchspacing 
\maketitle
\printclassification

\begin{abstract}
    In this article, we prove the stability with respect to the Hausdorff metric $d_H$ of the cut locus $\mathrm{Cut}(p, \mathfrak{g})$ of a point $p$ in a compact Riemannian manifold $(M, \mathfrak{g})$ under $C^2$ perturbation of the metric. Specifically, given a sequence of metrics $\mathfrak{g}_i$ on $M$, converging to $\mathfrak{g}$ in the $C^2$ topology, and a sequence of points $p_i$ in $M$, converging to $p$, we show that $\lim_i d_{H}\left( \mathrm{Cut}(p_i, \mathfrak{g}_i), \mathrm{Cut}(p, \mathfrak{g}) \right) = 0$. Along the way, we also prove the continuous dependence of the cut time map on the metric.
\end{abstract}

\section{Introduction}
The principal objects of study in Riemannian geometry are \emph{geodesics}, which are \emph{locally} distance minimizing curves. If $(M, \mathfrak{g})$ is a connected, complete Riemannian manifold, then between any two points, say, $p, q \in M$ there exists a geodesic which is \emph{globally} distance minimizing. We call such geodesics \emph{minimizers}. The cut locus $\mathrm{Cut}(p, \mathfrak{g})$ of $p$ consists of those points $q \in M$ beyond which a minimizer from $p$ to $q$ fails to be distance minimizing from $p$. Originally introduced by Poincar\`{e} \cite{poincareCutLocus}, the notion of cut locus has been extensively studied in the literature \cite{kobayashiCutLocus,buchnerAnalyticCutLocus,wolterCutLocus,sakaiBook}. A closely related set is $\mathrm{Sep}(p, \mathfrak{g})$, which consists of points $q$ admitting at least two distinct minimizers from $p$ to $q$. It is well known that $\mathrm{Cut}(p, \mathfrak{g})$ is the closure of $\mathrm{Sep}(p, \mathfrak{g})$, i.e., $\mathrm{Cut}(p, \mathfrak{g}) = \overline{\mathrm{Sep}(p, \mathfrak{g})}$, and consequently, $\mathrm{Cut}(p, \mathfrak{g})$ is a closed set of $M$. The concept of cut locus can be easily generalized to a closed submanifold (or even a closed set) of $M$, see \cite{basuPrasadCutLocus} for a survey of results.

The stability question of the cut locus has been studied in the literature as well. In \cite{buchnerCutLocusStability}, Buchner considered the notion of stability from the point of view of singularity theory \cite{guilleminGolubitsky}. Given $\mathcal{G}$ as the space of smooth Riemannian metrics on a compact manifold $M$, equipped with the Whitney $C^\infty$ topology, Buchner proved that $\mathrm{Cut}(p, \mathfrak{g})$ is stable for $\mathfrak{g}$ in an open dense subset $\mathcal{O} \subset \mathcal{G}$, provided $\dim M \le 6$. In \cite{stabilityMedialAxis}, the authors considered the stability of the set $\mathrm{Sep}^{\mathsf{in}}(\partial \Omega, \mathfrak{g}) \coloneqq \mathrm{Sep}(\partial \Omega, \mathfrak{g}) \cap \Omega$, called \emph{medial axis} therein, in the sense of the Hausdorff distance. They proved the Hausdorff stability of $\mathrm{Sep}^{\mathsf{in}}(\partial \Omega, \mathfrak{g})$ if the $C^2$-boundary $\partial \Omega$ is perturbed by $C^2$-small diffeomorphisms. In \cite{albanoCutLocusStability}, Albano also considered the Hausdorff stability of the cut locus. Given the collection $\mathcal{A}$ of bounded open domains $\Omega \subset \mathbb{R}^n$ with $C^2$-smooth boundary, and the collection $\mathcal{G}^\prime$ of $C^2$-smooth Riemannian metrics on $\mathbb{R}^n$, Albano considered the cut locus $\mathrm{Cut}^{\mathsf{in}}(\partial\Omega, \mathfrak{g}) \coloneqq \mathrm{Cut}(\partial \Omega, \mathfrak{g}) \cap \Omega$ of the boundary $\partial \Omega$ \emph{inside} $\Omega$, with respect to a metric $\mathfrak{g} \in \mathcal{G}^\prime$. Leveraging the properties of the viscosity solutions to the eikonal equations, it was proved that given a $C^2$-convergent sequence $(\Omega_i, \mathfrak{g}_i) \rightarrow (\Omega, \mathfrak{g})$, the cut locus $\mathrm{Cut}(\Omega_i, \mathfrak{g}_i)$ converges to $\mathrm{Cut}(\Omega, \mathfrak{g})$ in the Hausdorff metric; we refer to \cite{albanoCutLocusStability} for the notion of $C^2$-convergence in $\mathcal{A}$. In yet another direction, the topological stability of the cut locus is studied in \cite{cutLocusStabilityCLT} from the point of view of metric measure theory. 

In this article, we follow the approach of \cite{albanoCutLocusStability}, and extend it to compact Riemannian manifolds. In this context, we would like to mention that the viscosity solutions of the more general Hamilton-Jacobi equations were utilized in \cite{guijarroCutLocusCodim3} to get a characterization of the cut locus up to set of Hausdorff dimension $n-3$, and recently, similar techniques were also employed in \cite{cutLocusFrechetMean} where the authors used the notion of semiconcave functions to study the cut locus from a probabilistic point of view. \medskip

Let $M$ be a compact manifold, and $\mathcal{G}$ be the collection of smooth Riemannian metrics on $M$ with the Whitney $C^2$ topology. Then, for any $(p, \mathfrak{g}) \in M \times \mathcal{G}$, the cut locus $\mathrm{Cut}(p, \mathfrak{g})$, being a closed subset, is compact. In other words, we have a map $\mathrm{Cut} : M \times \mathcal{G} \rightarrow \mathcal{K}$ (\autoref{eq:cutLocusMap}), where $\mathcal{K}$ is the collection of compact sets in $M$. We topologize $\mathcal{K}$ with the Hausdorff metric. The main goal of this article is to prove the following.

\begin{theorem*}[\autoref{thm:cutLocusMapContinuous}]
    The map $\mathrm{Cut}$ is continuous.
\end{theorem*}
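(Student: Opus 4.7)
The plan is to reduce Hausdorff continuity of $\mathrm{Cut}$ to joint continuity of the underlying \emph{cut time function}, and then transfer the result to the cut locus by compactness. Fix an auxiliary reference metric on $M$, so that we may speak unambiguously of $v_i \to v$ in $TM$ when the base points vary. For $v \in S_p^{\mathfrak{g}} M$, let $T(p, v, \mathfrak{g})$ denote the cut time along the unit-speed $\mathfrak{g}$-geodesic $\gamma^{\mathfrak{g}}_v(t) = \exp_p^{\mathfrak{g}}(tv)$. Since $M$ is compact and the $\mathfrak{g}_i$ converge to $\mathfrak{g}$ in $C^2$, the $\mathfrak{g}_i$-diameters are uniformly bounded above and the $\mathfrak{g}_i$-injectivity radii uniformly bounded below, so $T$ takes values in a compact subinterval of $(0, \infty)$. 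Because $\mathrm{Cu}(p, \mathfrak{g})$ is the image of the compact sphere $S_p^{\mathfrak{g}} M$ under $v \mapsto \exp_p^{\mathfrak{g}}(T(p, v, \mathfrak{g}) v)$, joint continuity of $T$ together with joint continuity of $\exp$ will propagate to Hausdorff continuity of the cut locus.

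For joint continuity of $T$, I would prove upper and lower semi-continuity separately, relying on the classical characterization that $\gamma^{\mathfrak{g}}_v(T)$ is the cut point iff either (a) it is the first point conjugate to $p$ along $\gamma^{\mathfrak{g}}_v$, or (b) there are two distinct $\mathfrak{g}$-minimizers from $p$ to it. For \emph{upper semi-continuity}, set $t_0 = T(p, v, \mathfrak{g})$ and fix $\epsilon > 0$. Since $\gamma^{\mathfrak{g}}_v$ fails to minimize beyond $t_0$, there is a curve $\sigma$ from $p$ to $\gamma^{\mathfrak{g}}_v(t_0 + \epsilon)$ of $\mathfrak{g}$-length $L < t_0 + \epsilon$. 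Connecting $p_i$ to $p$ and $\gamma^{\mathfrak{g}_i}_{v_i}(t_0 + \epsilon)$ to $\gamma^{\mathfrak{g}}_v(t_0 + \epsilon)$ by short $\mathfrak{g}_i$-geodesics and concatenating with $\sigma$ yields a curve of $\mathfrak{g}_i$-length tending to $L$, while $\gamma^{\mathfrak{g}_i}_{v_i}|_{[0, t_0+\epsilon]}$ has $\mathfrak{g}_i$-length exactly $t_0 + \epsilon$. Hence $T(p_i, v_i, \mathfrak{g}_i) \le t_0 + \epsilon$ for large $i$, and letting $\epsilon \to 0^+$ gives the bound.

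For \emph{lower semi-continuity}, suppose for contradiction that $T_i := T(p_i, v_i, \mathfrak{g}_i) \to s < t_0$ along a subsequence, and let $q_i := \exp_{p_i}^{\mathfrak{g}_i}(T_i v_i) \to q := \gamma^{\mathfrak{g}}_v(s)$. Since $s < t_0$, the point $q$ is non-conjugate to $p$ along $\gamma^{\mathfrak{g}}_v$ and is reached by the unique $\mathfrak{g}$-minimizer $\gamma^{\mathfrak{g}}_v|_{[0, s]}$. If case (a) holds infinitely often, $C^2$-stability of the Jacobi equation forces $q$ to be $\mathfrak{g}$-conjugate to $p$ along $\gamma^{\mathfrak{g}}_v$, a contradiction. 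If case (b) holds infinitely often, two $\mathfrak{g}_i$-minimizers $\alpha_i \ne \beta_i$ from $p_i$ to $q_i$ subconverge by Arzelà--Ascoli to $\mathfrak{g}$-minimizers $\alpha, \beta$ from $p$ to $q$; by uniqueness $\alpha = \beta = \gamma^{\mathfrak{g}}_v|_{[0,s]}$, so the initial velocities of $\alpha_i, \beta_i$ both tend to $v$. But $C^2$-stability of non-conjugacy makes $\exp_{p_i}^{\mathfrak{g}_i}$ a local diffeomorphism near $sv$ for $i$ large, forcing $\alpha_i = \beta_i$, a contradiction.

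Given continuity of $T$, Hausdorff convergence of the cut loci follows. In one direction, any $q_i \in \mathrm{Cu}(p_i, \mathfrak{g}_i)$ equals $\exp_{p_i}^{\mathfrak{g}_i}(T_i v_i)$ for some unit $v_i$; a subsequence satisfies $v_i \to v \in S_p^{\mathfrak{g}} M$, and joint continuity yields $q_i \to \exp_p^{\mathfrak{g}}(T(p, v, \mathfrak{g}) v) \in \mathrm{Cu}(p, \mathfrak{g})$. In the reverse direction, given $q = \exp_p^{\mathfrak{g}}(T(p, v, \mathfrak{g}) v)$, choose $v_i \in S_{p_i}^{\mathfrak{g}_i} M$ with $v_i \to v$ and set $q_i := \exp_{p_i}^{\mathfrak{g}_i}(T(p_i, v_i, \mathfrak{g}_i) v_i) \in \mathrm{Cu}(p_i, \mathfrak{g}_i)$; then $q_i \to q$ uniformly in $v$ by uniform continuity on the compact parameter space. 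The main obstacle is the lower semi-continuity of $T$, and in particular establishing that non-conjugacy is uniformly stable under simultaneous perturbation of the base point, initial velocity, and metric --- most cleanly phrased by choosing a local trivialization of $TM$ around $p$ in which the Jacobi equation becomes an ODE with continuously varying coefficients.
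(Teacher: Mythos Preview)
Your proposal is correct but follows a genuinely different route from the paper. The paper establishes the two Hausdorff-convergence conditions of \autoref{lemma:hausdorffConvEquivalence} separately: for the direction ``every $x\in\mathrm{Cu}(p,\mathfrak{g})$ is approximated by $x_i\in\mathrm{Cu}(p_i,\mathfrak{g}_i)$'' (\autoref{lemma:hausdorffConv1}) it invokes viscosity solutions of the eikonal equation and the characterization of separating points in \autoref{prop:characterizationOfSepPoint}; for the direction ``limits of cut points are cut points'' (\autoref{lemma:hausdorffConv2}) it reduces to $x_i\in\mathrm{Se}(p_i,\mathfrak{g}_i)$ by density and then runs a careful analysis showing the perturbed exponential maps are embeddings on a fixed neighborhood via a Taylor-remainder argument and the $C^1$-convergence of geodesic flows. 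Continuity of the cut time (\autoref{cor:contCutTime}) is then obtained \emph{a posteriori} as a corollary.

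You instead prove joint continuity of the cut time $T$ \emph{first}, via the classical conjugate-point/multiple-minimizer dichotomy, and deduce both Hausdorff directions from it together with compactness of the unit sphere. This avoids the eikonal/viscosity machinery entirely and stays within elementary Riemannian geometry, at the cost of invoking $C^2$-stability of the Jacobi equation (which the paper sidesteps by using density of $\mathrm{Se}$ in $\mathrm{Cu}$ to treat only the multiple-minimizer case). The technical heart of both arguments is the same point you flag at the end: one needs not merely that $d\exp_{p_i}^{\mathfrak{g}_i}$ is nonsingular near $sv$, but that the injectivity neighborhood can be chosen \emph{uniformly} in $i$, so that the two converging initial velocities are eventually trapped in it. The paper handles this via its explicit $\Phi_i$ construction and the integral Taylor estimate in \autoref{eq:taylorIntegral}; your sketch would need an equivalent quantitative inverse-function-theorem step, driven by the $C^1$-convergence of the exponential maps. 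The paper's viscosity route has the advantage of extending more readily to cut loci of submanifolds and to lower-regularity settings (as in Albano's original work), while your approach is more self-contained and makes the role of the $C^2$ hypothesis transparent through the Jacobi equation.
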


As a corollary to a lemma (\autoref{lemma:hausdorffConv2}) used in the proof of the above, we also show the continuous dependence of the cut time map on the $C^2$ perturbation of the metric (\autoref{cor:contCutTime}).
\medskip

\noindent \textbf{Conventions:} All manifolds are connected, and without boundary. Boldface letters, e.g., $\mathbf{u}, \mathbf{v},\mathbf{n},$ etc. will always denote tangent vectors, whereas letters in Fraktur font, e.g., $\mathfrak{g}, \mathfrak{h}$ etc. are reserved for Riemannian metrics. For any metric $\mathfrak{g}$, the induced distance, length, exponential map etc. are explicitly denoted with the subscript (or superscript) $\mathfrak{g}$. For a tangent vector $\mathbf{v}$, the unique geodesic with respect to $\mathfrak{g}$ with initial velocity $\mathbf{v}$ is denoted as $\gamma_{\mathbf{v}}^{\mathfrak{g}}(t)$. 

\medskip
\noindent \textbf{Organization of the paper:} In \autoref{sec:prelim}, we recall some basic definitions and results on cut locus, viscosity solutions of the eikonal equation, and convergence of metrics. Then, in \autoref{sec:continuity}, we prove the main theorem of this article.

\section{Preliminaries} \label{sec:prelim}
In this section we collect some preliminary results on cut locus in a Riemannian manifold from the viewpoint of viscosity solutions of the Eikonal equation. We also recall the notion of Hausdorff convergence of compact sets, and the $C^2$ convergence of Riemannian metrics, along with some immediate consequences.

\subsection{Cut Locus} We recall the definition of the cut locus of a point and more generally of a submanifold, while deferring to \cite{sakaiBook} for details.

Let us fix a complete Riemannian manifold $(M, \mathfrak{g})$. Given a $C^1$ curve $\gamma : [a, b] \rightarrow M$, the length is defined as 
\[L_{\mathfrak{g}}(\gamma) \coloneqq \int_a^b \left\lVert \dot \gamma \right\rVert _{\mathfrak{g}} dt.\]
Given $x, y \in M$, the induced distance is denoted as $d_{\mathfrak{g}}(x, y) = \inf L_{\mathfrak{g}}(\gamma)$, where the infimum is taken over the family of (piecewise) $C^1$ curves joining $x$ to $y$.

Given a vector $\mathbf{v} \in T_p M$, let us denote the unique $\mathfrak{g}$-geodesic with initial velocity $\mathbf{v}$ as $\gamma_{\mathbf{v}}^{\mathfrak{g}} : [0, \infty) \rightarrow M$. This lets us define the exponential map 
\begin{align*}
    \exp^{\mathfrak{g}} : T_p M &\rightarrow M \\
    \mathbf{v} &\mapsto \gamma^{\mathfrak{g}}_{\mathbf{v}}(1)
\end{align*}
By the Hopf-Rinow theorem, for any $q \in M$, we have a unit-speed $\mathfrak{g}$-geodesic, called a \emph{minimizer}, say, $\gamma : [0, L] \rightarrow M$ such that $\gamma(0) = p, \gamma(L) = q$ and $L = L_{\mathfrak{g}}(\gamma) = d_{\mathfrak{g}}(p, q)$. The \emph{cut locus} $\mathrm{Cut}(p, \mathfrak{g})$ consists of those points $q \in M$ beyond which a minimizer joining $p$ to $q$ fails to be distance minimizing from $p$.

Given a closed submanifold $N \subset M$, the normal bundle with respect to $\mathfrak{g}$ is denoted as 
\[TN^{\perp_{\mathfrak{g}}} \coloneqq \cup_{p \in N} \left\{ \mathbf{n} \in T_p M \;\middle|\; \mathfrak{g}(\mathbf{n}, \mathbf{v}) = 0, \; \text{for all } \mathbf{v} \in T_p N \right\}.\]
An \emph{$N$-geodesic} with respect to $\mathfrak{g}$ joining $N$ to $q$ is a $\mathfrak{g}$-geodesic joining a point in $N$ to $q$, with initial velocity in $TN^{\perp_{\mathfrak{g}}}$. A unit speed $N$-geodesic $\gamma : [0, L] \rightarrow M$ is said to be an \emph{$N$-minimizer} (with respect to $\mathfrak{g}$) joining $N$ to $q$ if $\gamma(0) \in N, \dot\gamma(0) \in TN^{\perp_{\mathfrak{g}}}$, and $\gamma(L) = q$, where $L = L_{\mathfrak{g}}(\gamma) = d_{\mathfrak{g}}(N, q)$. By the first variational principle, any unit-speed $\mathfrak{g}$-geodesic that minimizes the distance from $N$ to a point is an $N$-minimizer. Since $N$ is closed, the completeness of $\mathfrak{g}$ implies that given any $q \in M$ there exists an $N$-minimizer joining $N$ to $q$. We then define the cut locus $\mathrm{Cut}(N, \mathfrak{g})$ as the set of points $q \in M$, beyond which a minimizer joining $N$ to $q$ fails to be distance minimizing from $N$. The set of \emph{separating points} for $N$ is the set 
\[\mathrm{Sep}(N, \mathfrak{g}) \coloneqq \left\{ q \in M \;\middle|\; \text{there are at least two distance minimizing geodesics joining $N$ to $q$} \right\}.\]
It is well-known that $\mathrm{Sep}(N, \mathfrak{g}) \subset \mathrm{Cut}(N, \mathfrak{g})$, and moreover
\begin{equation}\label{eq:sepIsDenseInCut}
    \mathrm{Cut}(N, \mathfrak{g}) \coloneqq \overline{\mathrm{Sep}(N, \mathfrak{g})}.
\end{equation}
Given a unit vector $\mathbf{n}$ in the normal bundle $TN^{\perp_{\mathfrak{g}}}$ of $N$, the \emph{cut time} is defined as 
\begin{equation}\label{eq:cutTime}
    \rho_{\mathfrak{g}}(N, \mathbf{n}) \coloneqq \sup \left\{ t \;\middle|\;  d_{\mathfrak{g}}(N, \gamma_{\mathbf{n}}^{\mathfrak{g}}(t)) = t \right\}.
\end{equation}
As $\mathfrak{g}$ is complete, it follows that $\mathrm{Cut}(N, \mathfrak{g}) = \left\{ \gamma_{\mathbf{n}}^{\mathfrak{g}}(\rho_{\mathfrak{g}}(N, \mathbf{n})) \;\middle|\; \mathbf{n} \in TN^{\perp_{\mathfrak{g}}}, \; \left\lVert \mathbf{n} \right\rVert _{\mathfrak{g}} = 1\right\}$.

The \emph{injectivity radius} of $(M, \mathfrak{g})$ is defined as 
\begin{equation}\label{eq:injectivityRadius}
    \mathrm{Inj}(\mathfrak{g}) \coloneqq \inf_{p \in M} \mathrm{Inj}(p, \mathfrak{g}),
\end{equation}
where 
\[\mathrm{Inj}(p, \mathfrak{g}) \coloneqq \inf \left\{ \rho_{\mathfrak{g}}(p, \mathbf{v}) \;\middle|\; \mathbf{v} \in T_p M, \; \left\lVert \mathbf{v} \right\rVert _{\mathfrak{g}} = 1 \right\}.\]
If $M$ is compact, from the Whitehead convexity theorem \cite{whiteheadConvexity} it follows that $\mathrm{Inj}(\mathfrak{g}) > 0$. For any $0 < \delta < \mathrm{Inj}(\mathfrak{g})$, the set $S = S_{\mathfrak{g}}(p, \delta) \coloneqq \left\{ q \;\middle|\; d_{\mathfrak{g}}(p,q) = \delta \right\}$ is diffeomorphic to a sphere of dimension $\dim M - 1$. It is easy to see that
\begin{equation}\label{eq:cutLocusOfSphere}
    \mathrm{Cut}(S) = \left\{ p \right\} \cup \mathrm{Cut}(p, \mathfrak{g}).
\end{equation}
The \emph{diameter} of $(M, \mathfrak{g})$ is defined as the supremum
\begin{equation}\label{eq:diameter}
    \mathrm{Diam}(\mathfrak{g}) \coloneqq \sup_{p, q\in M} d_{\mathfrak{g}}(p, q),
\end{equation}
which is a maximum whenever $M$ is compact.

\subsection{Convergence of Compact Sets}
Let $(M, d)$ be a metric space. For arbitrary subsets $A, B \subset M$, the \emph{Hausdorff distance} between them is defined as 
\begin{equation}\label{eq:hausdorffDistance}
    d_H(A, B) \coloneqq \max\left\{ \sup_{a \in A} d(a, B), \; \sup_{b \in B} d(A, b) \right\},
\end{equation}
where $d(a, B) \coloneqq \inf_{b \in B} d(a, b)$, and $d(A, b) \coloneqq \inf_{a \in A}d(a,b)$. We shall need the following characterization for convergence with respect to the Hausdorff distance.

\begin{lemma}\label{lemma:hausdorffConvEquivalence}
    Suppose $M$ is compact, and $X_j, X \subset M$ are closed (hence compact) subsets. Then, $\lim_j d_H(X_j, X) = 0$ if and only if the following holds:
    \begin{enumerate}
        \item\label{lemma:hausdorffConvEquivalence:1} for any $x \in X$, there exists a sequence $x_j \in X_j$ with $\lim_j x_j = x$, and
        \item\label{lemma:hausdorffConvEquivalence:2} for any convergent sequence $x_{i_j} \in X_{i_j}$, we have $\lim_j x_{i_j} \in X$
    \end{enumerate}
\end{lemma}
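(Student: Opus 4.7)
The plan is to prove the two directions separately, with the forward direction essentially immediate from the definitions and the reverse direction handled by a contradiction argument in which compactness is used to extract convergent subsequences.

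For the forward direction, assume $d_H(X_j,X) \to 0$. To verify (1), fix $x \in X$; the definition \eqref{eq:hausdorffDistance} gives $d(x,X_j) \le d_H(X_j,X) \to 0$, so since each $X_j$ is compact I can choose $x_j \in X_j$ realizing $d(x,X_j)$, and then $x_j \to x$. To verify (2), suppose $x_{i_j} \in X_{i_j}$ converges to some $x \in M$. Then $d(x_{i_j},X) \le d_H(X_{i_j},X) \to 0$, and by continuity of $y \mapsto d(y,X)$ I obtain $d(x,X)=0$; since $X$ is closed, $x \in X$.

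For the reverse direction, I will argue by contradiction. Suppose (1) and (2) hold but $d_H(X_j,X) \not\to 0$. Then there exists $\varepsilon>0$ and a subsequence, which after relabeling I call $X_{i_j}$, with $d_H(X_{i_j},X) > \varepsilon$ for all $j$. Passing to a further subsequence, I may assume that the same one of the two terms in the maximum \eqref{eq:hausdorffDistance} exceeds $\varepsilon$ for every $j$.

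In the first case, $\sup_{y \in X} d(y, X_{i_j}) > \varepsilon$ for all $j$. Since $X$ is compact and $y \mapsto d(y, X_{i_j})$ is continuous, this supremum is attained at some $y_j \in X$, and by compactness of $X$ I may assume $y_j \to y_\infty \in X$. Applying (1) to the point $y_\infty$, I obtain a sequence $w_n \in X_n$ with $w_n \to y_\infty$; in particular $w_{i_j} \in X_{i_j}$ and $w_{i_j} \to y_\infty$. Then
\[
\varepsilon < d(y_j, X_{i_j}) \le d(y_j, w_{i_j}) \le d(y_j, y_\infty) + d(y_\infty, w_{i_j}) \longrightarrow 0,
\]
a contradiction. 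In the second case, $\sup_{x \in X_{i_j}} d(X,x) > \varepsilon$ for all $j$, and since each $X_{i_j}$ is compact this supremum is attained at some $x_j \in X_{i_j}$ with $d(X,x_j) > \varepsilon$. By compactness of $M$, a subsequence satisfies $x_{j_k} \to x_\infty$, and then (2) forces $x_\infty \in X$, so $d(X, x_{j_k}) \le d(x_\infty, x_{j_k}) \to 0$, again contradicting $d(X,x_j) > \varepsilon$. There is no serious obstacle here; the only subtlety is the careful bookkeeping with the nested subsequences when splitting into the two cases.
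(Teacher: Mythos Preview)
Your proof is correct. The forward direction is essentially identical to the paper's (your version is in fact slightly cleaner, using compactness of $X_j$ to realize the infimum rather than an explicit $\frac{1}{n}$ scheme). In the reverse direction there is a small tactical difference worth noting: the paper proves $\sup_{b \in X} d(X_j, b) \to 0$ \emph{directly}, by choosing a finite $\tfrac{\epsilon}{2}$-net $\{y_1,\dots,y_K\}$ in $X$, applying (1) to each $y_i$, and using the triangle inequality; it then handles the other half $\sup_{a \in X_j} d(a,X) \to 0$ by the same contradiction-plus-compactness argument you give in your second case. Your treatment is more uniform---both halves by contradiction, extracting a limit point and invoking (1) or (2) respectively---which is arguably tidier, while the paper's finite-net argument has the minor advantage of being constructive for that half. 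Neither approach requires anything the other doesn't, and the subsequence bookkeeping you flag is indeed the only point requiring care.
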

\ifarxiv
\begin{proof}
    Suppose, $\lim_j d_H(X_j, X) = 0$. Let us prove (\hyperref[lemma:hausdorffConvEquivalence:1]{1}) first. Pick $x \in X$. We have, 
    \[0 \le d(X_j, x) \le \sup_{b \in X} d(X_j, b) \le d_H(X_j, X).\]
    Then, for each $n \ge 1$ we can get a strictly increasing sequence $N_n$ such that 
    \[d(X_j, x) < \frac{1}{n}, \quad \text{for all } j \ge N_n.\]
    Now, for each $N_n \le j < N_{n+1}$, we have $d(X_j, x) = \inf_{a \in X_j} d(a, x) < \frac{1}{n}$, and hence, we can pick $x_j \in X_j$ so that $d(x_j, x) < \frac{1}{n}$. For $1 \le j < N_1$, pick arbitrary $x_j \in X_j$. Then, we have $\lim_j x_j = x$, as required, showing (\hyperref[lemma:hausdorffConvEquivalence:1]{1}). Next, we prove (\hyperref[lemma:hausdorffConvEquivalence:2]{2}). Assume, $x_{i_j} \in X_{i_j}$ is a sequence converging to $y \coloneqq \lim_j x_{i_j}$. Now, for $\epsilon > 0$, there exists $N \ge 1$ such that 
    \[d(x_{i_j}, y) < \frac{\epsilon}{2}, \quad \text{and} \quad d_H(X_{i_j}, X) < \frac{\epsilon}{2}, \quad \text{for all } j \ge N.\]
    For $j \ge N$, we then have
    \[\sup_{a \in X_{i_j}} d(a, X) \le d_H(X_{i_j}, X) < \frac{\epsilon}{2},\]
    and therefore $d(x_{i_j}, X) < \frac{\epsilon}{2}$. Hence, there exists $y_j \in X$ such that $d(x_{i_j}, y_j) < \frac{\epsilon}{2}$. Thus, for $j \ge N$, we get 
    \[d(y_j, y) \le d(y_j, x_{i_j}) + d(x_{i_j}, y) < \frac{\epsilon}{2} + \frac{\epsilon}{2} = \epsilon.\]
    In other words, $y \in \bar{X} = X$, as $X$ is assumed to be closed. This proves (\hyperref[lemma:hausdorffConvEquivalence:2]{2})\\

    Let us now assume (\hyperref[lemma:hausdorffConvEquivalence:1]{1}) and (\hyperref[lemma:hausdorffConvEquivalence:2]{2}). Fix some $\epsilon > 0$. By compactness of $X$, we may assume that there are $y_1, \dots, y_K \in X$ such that 
    \[X = \bigcup_{i=1}^K B\left( y_i, \frac{\epsilon}{2} \right) = \bigcup_{i=1}^K \left\{ x \;\middle|\; d(y_i, x) < \frac{\epsilon}{2} \right\}.\]
    Then, from (\hyperref[lemma:hausdorffConvEquivalence:1]{1}), there exists $x^i_j \in X_j$ such that $\lim_j x^i_j = y_i$ for each $1 \le i \le K$. Consequently, we have $N \ge 1$ such that 
    \[d(X_j, y_i) < \frac{\epsilon}{2}, \quad \text{for all } j \ge N \; \text{and } 1 \le i \le K.\]
    Now, for any $y \in X$, we have some $1 \le i_0 \le K$ such that $d(y_{i_0}, y) < \frac{\epsilon}{2}$, and hence 
    \[d(X_j, y) \le d(X_j, y_{i_0}) + d(y_{i_0}, y) < \frac{\epsilon}{2} + \frac{\epsilon}{2} = \epsilon, \quad \text{for all } j \ge N.\]
    In other words, $\sup_{y \in X} d(X_j, y) \le \epsilon$ for $j \ge N$. As $\epsilon > 0$ is arbitrary, we get
    \[\lim_j \sup_{b \in X} d(X_j, b) = 0.\]
    Thus, we now need to show that $\lim_j \sup_{a \in X_j} d(a, X) = 0$. Let us assume the contrary. Then, for some $\epsilon > 0$, there exists an increasing sequence $i_j$ such that 
    \[\sup_{a \in X_{i_j}} d(a, X) > \epsilon.\]
    We can pick $x_{i_j} \in X_{i_j}$ such that $d(x_{i_j}, X) > \epsilon$. Now, since $M$ is compact, a subsequence of $x_{i_j}$, say, $x_{i_{j_k}}$ converges to some $y$. By (\hyperref[lemma:hausdorffConvEquivalence:2]{2}), we then have $y \in X$, which contradicts $d(x_{i_{j_k}}, y) \ge d(x_{i_{j_k}}, X) > \epsilon$. Hence, $\lim_j \sup_{a \in X_j} d(a, X) = 0$ as well. But then we have $\lim_j d_H(X_j, X) = 0$, concluding the proof.
\end{proof}
\fi

\subsection{Convergence of Riemannian Metrics} Let us briefly recall the Whitney $C^r$-topology on function spaces, we refer to \cite{guilleminGolubitsky, michorManfifoldsOfDiffMap} for details.

Given two topological spaces $X, Y$, on the collection of continuous maps $\mathcal{C}(X, Y)$ one can consider two natural topologies. The \emph{strong} topology has the basic open sets $\left\{ g \in \mathcal{C}(X, Y) \;\middle|\; g(X) \subset U \right\}$ for $U \subset X$ open. On the other hand, the \emph{weak} topology is the usual compact-open topology, where the basic open sets are of the form $\left\{ g \in \mathcal{C}(X, Y) \;\middle|\; g(K) \subset U \right\}$ for $K \subset X$ compact and $U \subset Y$ open. Note that the strong topology is not even Hausdorff, as it fails to separate any two surjective maps.

Now, suppose $(Y, d)$ is a metric space. Then, the weak topology on $\mathcal{C}(X, Y)$ is also known as the compact convergence topology. In particular, if $f_n \rightarrow f$ in the weak topology, then $f_n|_K \rightarrow f|_K$ uniformly for all compact set $K \subset X$. On the other hand, further assuming that $X$ is paracompact, a neighborhood basis of some $f \in \mathcal{C}(X, Y)$ in the strong topology is given as 
\[\left\{ g \in \mathcal{C}(X, Y) \;\middle|\; d(f(x), f(y)) < \epsilon(x), \; x \in X \right\},\]
where $\epsilon : X \rightarrow (0, \infty)$ is a continuous map. Consequently, $f_n \rightarrow f$ in the strong topology implies that there exists a compact set $K \subset X$, such that $f_n|_{X \setminus K} = f|_{X \setminus K}$ for $n$ large, and $f_n|_K \rightarrow f|_K$ uniformly.\medskip

Now, assume that $M, N$ are smooth manifolds. Then, the collection $J^r(M, N)$ of $r$-jets of maps $M \rightarrow N$ is itself a manifold. We have a natural injection 
\[j^r : C^\infty(M, N) \hookrightarrow \mathcal{C}\left( M, J^r(M, N) \right).\]
The induced topology on $C^\infty(M, N)$ given by the strong (resp. weak) topology on $\mathcal{C}\left( M, J^r(M, N) \right)$ is called the strong (resp. weak) Whitney $C^r$ topology. If $M$ is compact, the two topologies coincide.

Given a smooth vector bundle $E \rightarrow M$, one can construct the $r$-jet space of sections of $E$, and denote it as $E^{(r)} \subset J^r(M, E)$. Then, we have the injection $j^r : \Gamma(E) \hookrightarrow E^{(r)}$, which induces the strong (or weak) Whitney $C^r$ topology on the space $\Gamma E$ of smooth sections of $E$. Again, if $M$ is compact the two topologies are the same, and it is then a metric topology.\medskip

Let us now consider $E = T^*M \odot T^*M$, the symmetric tensor product of the cotangent bundles on a smooth manifold $M$ of dimension $n$. One can then identify the collection of smooth Riemannian metrics on $M$ as a subset $\mathcal{G} \subset \Gamma E$. Let us explain a $C^r$-neighborhood of a metric $\mathfrak{g} \in \mathcal{G}$ in the weak topology. Fix locally finite, closure compact, coordinate charts $\left\{ U_\alpha, \phi_\alpha \right\}$ covering $M$, and write $\mathfrak{g}$ as $\mathfrak{g}_{ij}^\alpha dx^i_\alpha \odot dx^j_\alpha$ on each $U_\alpha$. Set $\tilde{\mathfrak{g}}_{ij}^\alpha \coloneqq \mathfrak{g}_{ij}^\alpha \circ \phi_\alpha^{-1} : \phi_\alpha(U_\alpha) \subset \mathbb{R}^n \rightarrow \mathbb{R}$. Then, for a multi-index $\beta$ with order $\left\lvert \beta \right\rvert \le r$, we have the map $\partial_\beta \tilde{\mathfrak{g}}_{ij}^\alpha : \mathbb{R}^n \rightarrow \mathbb{R}$. Fix arbitrary positive numbers $\epsilon = \left\{ \epsilon_\alpha \right\}$. Then, another metric $\mathfrak{h} \in \mathcal{G}$ is $\epsilon$ close to $\mathfrak{g}$ in the $C^r$ topology if for each $\alpha$ and for each $i, j$ we have
    \[ \left\lvert \partial_\beta \tilde{\mathfrak{g}}_{ij}^\alpha(x) - \partial_\beta \tilde{\mathfrak{h}}_{ij}^\alpha(x)\right\rvert < \epsilon_\alpha, \quad x \in \phi_\alpha(U_\alpha), \; |\beta| \le r.\]
    In particular, $\mathfrak{g}_i \rightarrow \mathfrak{g}$ in the weak $C^r$ topology can be understood as the uniform $C^r$ convergence of the local coefficients (with some fixed coordinate system).\medskip

We observe that the metric convergence gives rise to the convergence in the norm and the length of a smooth curve.
\begin{lemma}\label{lemma:normConvergence}
    Let $\mathfrak{g}_i$ be a sequence of Riemannian metrics on $M$, converging in the weak $C^0$-topology to the metric $\mathfrak{g}$. Then, the induced norm $\left\lVert \cdot \right\rVert _{\mathfrak{g}_i}$ converges to $\left\lVert \cdot \right\rVert _{\mathfrak{g}}$ in the weak $C^0$-sense. Consequently, for any $C^1$ curve $\gamma : [0, 1] \rightarrow M$, the length $L_{\mathfrak{g}_i}(\gamma) \coloneqq \int_0^1 \left\lVert \dot \gamma \right\rVert _{\mathfrak{g}_i} dt$ converges to $L_{\mathfrak{g}}(\gamma) \coloneqq \int_0^1 \left\lVert \dot \gamma \right\rVert _{\mathfrak{g}} dt$.
\end{lemma}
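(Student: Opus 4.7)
The plan is to unpack the statement ``$\|\cdot\|_{\mathfrak{g}_i} \to \|\cdot\|_{\mathfrak{g}}$ in weak $C^0$'' as uniform convergence of the norm functions on every compact subset of $TM$. Fix such a compact set $K \subset TM$. Since the projection $\pi(K) \subset M$ is compact and the coordinate cover $\{(U_\alpha, \phi_\alpha)\}$ is locally finite, $\pi(K)$ meets only finitely many $U_\alpha$. Choose a partition $K = K_1 \cup \dots \cup K_N$ with each $K_s$ compact and contained in $TU_{\alpha(s)}$; it suffices to prove uniform convergence on each $K_s$.

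On a single chart $U_\alpha$, write $\mathbf{v} = v^i \partial_i$; then
\[
\|\mathbf{v}\|_{\mathfrak{h}}^2 \;=\; \mathfrak{h}^{\alpha}_{ij}(\pi(\mathbf{v}))\, v^i v^j
\]
for any metric $\mathfrak{h}$. First I would observe that the coefficients $v^i$, viewed as functions on $TU_\alpha$, are continuous and hence bounded on the compact set $K_s$, and similarly $\pi(K_s)$ is a compact subset of $U_\alpha$ on which the weak $C^0$ convergence of metrics translates into uniform convergence $\mathfrak{g}^{\alpha}_{i,jk} \Rightarrow \mathfrak{g}^{\alpha}_{jk}$ of the local coefficients. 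Combining these two facts, $\|\mathbf{v}\|_{\mathfrak{g}_i}^2$ converges uniformly on $K_s$ to $\|\mathbf{v}\|_{\mathfrak{g}}^2$. Since both limit values and the approximations are bounded in a common interval $[0,R]$, and $t \mapsto \sqrt{t}$ is uniformly continuous on $[0,R]$, the uniform convergence passes to the square root, giving $\|\cdot\|_{\mathfrak{g}_i} \Rightarrow \|\cdot\|_{\mathfrak{g}}$ on $K_s$, and then on all of $K$.

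For the consequence about lengths, note that if $\gamma : [0,1] \to M$ is $C^1$ then $\dot\gamma([0,1])$ is a compact subset of $TM$. Applying the norm convergence to this compact set gives $\|\dot\gamma(t)\|_{\mathfrak{g}_i} \Rightarrow \|\dot\gamma(t)\|_{\mathfrak{g}}$ uniformly in $t \in [0,1]$. Since uniform convergence on a compact interval commutes with Riemann integration, we conclude $L_{\mathfrak{g}_i}(\gamma) \to L_{\mathfrak{g}}(\gamma)$.

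I do not expect a serious obstacle. The only mild point of care is the passage from the quadratic form to the norm, i.e.\ ensuring that the square root does not spoil uniform convergence near the zero section; this is resolved by the uniform continuity of $\sqrt{\cdot}$ on any bounded interval containing $\|\mathbf{v}\|_{\mathfrak{g}_i}^2$ and $\|\mathbf{v}\|_{\mathfrak{g}}^2$ for all large $i$ and $\mathbf{v} \in K$, which in turn follows from compactness of $K$ together with $\mathfrak{g}_i \to \mathfrak{g}$ in $C^0$.
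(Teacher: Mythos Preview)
Your proposal is correct and follows essentially the same route as the paper: reduce to finitely many coordinate charts covering the compact projection, use boundedness of the fibre coordinates together with uniform convergence of the local metric coefficients to get uniform convergence of the squared norm, and then pass to the norm via uniform continuity of $\sqrt{\cdot}$. The paper's version differs only in presentation (it first treats a single chart and then globalizes, and it invokes uniform continuity of $\sqrt{\cdot}$ on all of $[0,\infty)$ rather than on a bounded interval), and it leaves the length consequence implicit where you spell it out via compactness of $\dot\gamma([0,1])$.
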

\ifarxiv
\begin{proof}
    Let $U \subset M$ be a coordinate chart on $M$ with compact closure, denote the coordinate functions as $x^1,\dots ,x^n$. Fix some compact set $\mathcal{K} \subset TU$. For any $\mathbf{v} = \sum_i v^i \partial_{x^i} \in TU$, we have a continuous function $f(\mathbf{v}) = \sum_{i,j} |v_i v_j|$, which is bounded from above by some $C > 0$ on $\mathcal{K}$. Fix $\epsilon > 0$. Let us write $\mathfrak{g}_i \coloneqq \sum \mathfrak{g}_i^{jk}dx^j \odot dx^k, \mathfrak{g} \coloneqq \sum \mathfrak{g}^{jk}dx^j \odot dx^k$ on $U$. Since $\mathfrak{g}_i \rightarrow \mathfrak{g}$, we then have some $N \ge 1$ such that 
    \[\sum_{ij} \left\lvert \mathfrak{g}_i^{jk}(x) - \mathfrak{g}^{jk}(x) \right\rvert < \frac{\epsilon}{C}, \quad 1 \le j,k \le n, \; x \in U, \; i \ge N.\]
    Then, for any $\mathbf{v} \in \mathcal{K}$, we have 
    \[\left\lvert \mathfrak{g}_i(\mathbf{v}, \mathbf{v}) - \mathfrak{g}(\mathbf{v}, \mathbf{v}) \right\rvert \le \sum_{i,j} \left\lvert \mathfrak{g}_i^{jk} - \mathfrak{g}^{jk} \right\rvert |v_i v_j| \le C \sum_{i,j} \left\lvert \mathfrak{g}_i^{jk} - \mathfrak{g}^{jk} \right\rvert < \epsilon,\]
    for $i \ge N$. In other words, $\left\lVert \cdot \right\rVert^2_{\mathfrak{g}_i} \rightarrow \left\lVert \cdot \right\rVert^2_{\mathfrak{g}}$ uniformly on $\mathcal{K} \subset TU$. For an arbitrary compact set $\mathcal{K} \subset TM$, we can consider finitely many charts, covering the compact set $\pi(\mathcal{K}) \subset M$, where $\pi : TM \rightarrow M$ is the projection. Finally, as $x \mapsto \sqrt{x}$ is uniformly continuous on $[0, \infty)$, the proof follows immediately.
\end{proof}
\fi

\begin{remark}\label{rmk:sakaiC0Convergence}
    In \cite{ehrlichInjectivityCont,sakaiInjectivityCont}, the authors have used the notation $\mathfrak{g} \le \mathfrak{h}$ for two given metrics on $M$ to mean that $\mathfrak{g}(\mathbf{v}, \mathbf{v}) \le \mathfrak{h}(\mathbf{v}, \mathbf{v})$ for all $\mathbf{v} \in TM$. This leads to a weaker formulation of $C^0$-convergence when $M$ is compact. In particular, if $\mathfrak{g}_i \rightarrow \mathfrak{g}$ in the $C^0$-sense, then one can show that 
    \begin{equation}\label{eq:sakaiC0Conv}
        (1 - \epsilon_i) \mathfrak{g} \le \mathfrak{g}_i \le (1 + \epsilon_i) \mathfrak{g},
    \end{equation}
    for some real numbers $\epsilon_i \rightarrow 0$.
    \ifarxiv
    Note that if we prove the inequality for some $\mathbf{v} \ne 0$, we can deduce it for all $\lambda \mathbf{v}$ with $\lambda \ge 0$. Since the unit bundle $\left\{ \mathbf{v} \;\middle|\; \left\lVert \mathbf{v} \right\rVert _{\mathfrak{g}} = 1 \right\}$ is compact, using \autoref{lemma:normConvergence}, inductively we have $N_1 < N_2 < \dots $ such that 
    \[\left\lvert \mathfrak{g}_i(\mathbf{v}, \mathbf{v}) - \mathfrak{g}(\mathbf{v}, \mathbf{v}) \right\rvert < \frac{1}{k}, \quad \text{for all $\mathbf{v} \in TM$ with $\left\lVert \mathbf{v} \right\rVert _{\mathfrak{g}} = 1$, and for all $i \ge N_k$.}\]
    Set 
    \begin{equation*}
        \epsilon_i \coloneqq 
        \begin{cases}
            2 \max_{\left\lVert \mathbf{v} \right\rVert _{\mathfrak{g}} = 1} \left\lvert \mathfrak{g}_i (\mathbf{v},\mathbf{v}) - \mathfrak{g}(\mathbf{v}, \mathbf{v}) \right\rvert, \quad 1 \le i < N_1, \\
            \frac{1}{j}, \quad N_j \le i < N_{j + 1},
        \end{cases}
    \end{equation*}
    so that $\epsilon_i \rightarrow 0$. Then, for any $\mathbf{v}$, with $\mathfrak{g}(\mathbf{v}, \mathbf{v}) = 1$, we have
    \begin{align*}
        \left\lvert \mathfrak{g}_i(\mathbf{v}, \mathbf{v}) - \mathfrak{g}(\mathbf{v},\mathbf{v}) \right\rvert < \epsilon_i & \Rightarrow 1-\epsilon_i < \mathfrak{g}_i(\mathbf{v}, \mathbf{v}) < 1 + \epsilon_i \\
        & \Rightarrow (1 - \epsilon_i) \mathfrak{g}(\mathbf{v}, \mathbf{v}) < \mathfrak{g}_i(\mathbf{v}, \mathbf{v}) < (1 + \epsilon_i) \mathfrak{g}(\mathbf{v}, \mathbf{v}).
    \end{align*}
    Thus, \autoref{eq:sakaiC0Conv} follows.
    \else
    Indeed, this follows immediately from \autoref{lemma:normConvergence}, since the unit bundle $\left\{ \mathbf{v} \;\middle|\; \left\lVert \mathbf{v} \right\rVert _{\mathfrak{g}} = 1 \right\}$ is compact.
    \fi
\end{remark}

As an application, we get the convergence of the induced distance on a compact manifold.

\begin{lemma}\label{lemma:distanceConvergence}
    Let $\mathfrak{g}_i$ be sequence of Riemannian metrics on a compact manifold $M$, converging to the metric $\mathfrak{g}$ in the $C^0$-sense. Suppose, $p_i, q_i \in M$ are sequences of points, with $p = \lim p_i, q = \lim q_i$. Then, $\lim d_{\mathfrak{g}_i}(p_i,q_i) = d_{\mathfrak{g}}(p,q)$.
\end{lemma}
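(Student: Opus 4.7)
The plan is to sandwich $d_{\mathfrak{g}_i}$ between scalar multiples of $d_{\mathfrak{g}}$ via the metric comparison in \autoref{rmk:sakaiC0Convergence}, and then exploit the continuity of $d_{\mathfrak{g}}$ on $M \times M$.

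First I would invoke \autoref{rmk:sakaiC0Convergence} to extract a sequence $\epsilon_i \to 0$ such that
\[
(1-\epsilon_i)\,\mathfrak{g} \;\le\; \mathfrak{g}_i \;\le\; (1+\epsilon_i)\,\mathfrak{g}
\]
as symmetric bilinear forms on $TM$. Taking square roots pointwise, this upgrades to the norm comparison $\sqrt{1-\epsilon_i}\,\lVert\mathbf{v}\rVert_{\mathfrak{g}} \le \lVert\mathbf{v}\rVert_{\mathfrak{g}_i} \le \sqrt{1+\epsilon_i}\,\lVert\mathbf{v}\rVert_{\mathfrak{g}}$ for every $\mathbf{v} \in TM$.

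Next, integrating these inequalities along any piecewise $C^1$ curve $\gamma$ yields
\[
\sqrt{1-\epsilon_i}\,L_{\mathfrak{g}}(\gamma) \;\le\; L_{\mathfrak{g}_i}(\gamma) \;\le\; \sqrt{1+\epsilon_i}\,L_{\mathfrak{g}}(\gamma).
\]
Taking the infimum over all piecewise $C^1$ curves joining $p_i$ to $q_i$ then gives
\[
\sqrt{1-\epsilon_i}\,d_{\mathfrak{g}}(p_i, q_i) \;\le\; d_{\mathfrak{g}_i}(p_i, q_i) \;\le\; \sqrt{1+\epsilon_i}\,d_{\mathfrak{g}}(p_i, q_i).
\]

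To conclude, I would observe that $d_{\mathfrak{g}} : M \times M \to \mathbb{R}$ is continuous (it is a genuine metric on the compact space $M$), so from $p_i \to p$ and $q_i \to q$ we obtain $d_{\mathfrak{g}}(p_i, q_i) \to d_{\mathfrak{g}}(p, q)$. Combining with $\epsilon_i \to 0$, both the lower and upper bounds in the sandwich converge to $d_{\mathfrak{g}}(p, q)$, so the squeeze theorem gives $\lim_i d_{\mathfrak{g}_i}(p_i, q_i) = d_{\mathfrak{g}}(p, q)$. There is no real obstacle here; the only subtlety is ensuring that the pointwise bilinear-form inequality passes cleanly to lengths of curves and then to the infimum defining the distance, which is automatic because the comparison constants $\sqrt{1 \pm \epsilon_i}$ are independent of the curve.
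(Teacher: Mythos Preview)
Your argument is correct and is considerably more streamlined than the paper's. The paper establishes the two inequalities $\limsup_i d_{\mathfrak{g}_i}(p_i,q_i) \le d_{\mathfrak{g}}(p,q)$ and $\liminf_i d_{\mathfrak{g}_i}(p_i,q_i) \ge d_{\mathfrak{g}}(p,q)$ separately and by hand: for the first it concatenates a $\mathfrak{g}$-minimizer from $p$ to $q$ with short $\mathfrak{g}$-minimizers $p_i\to p$ and $q\to q_i$ and estimates the $\mathfrak{g}_i$-length of the concatenation; for the second it takes $\mathfrak{g}_i$-minimizers $\gamma_i$ from $p_i$ to $q_i$, invokes Arzel\`a--Ascoli to extract a uniformly convergent subsequence with Lipschitz limit $\eta$, and proves a lower-semicontinuity estimate $L_{\mathfrak{g}}(\eta)\le \liminf_i L_{\mathfrak{g}_i}(\gamma_i)$. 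Your route bypasses all of this by pulling the global bilinear sandwich $(1-\epsilon_i)\mathfrak{g}\le\mathfrak{g}_i\le(1+\epsilon_i)\mathfrak{g}$ from \autoref{rmk:sakaiC0Convergence} straight through to distances, reducing the problem to the continuity of the fixed function $d_{\mathfrak{g}}$. What your approach buys is brevity; what the paper's approach buys is that the Arzel\`a--Ascoli machinery (limits of minimizers are minimizers) is set up explicitly and then reused in the proof of \autoref{lemma:hausdorffConv2}, so the extra work here is not wasted in the larger context. One trivial remark: you implicitly need $\epsilon_i<1$ for $\sqrt{1-\epsilon_i}$ to make sense, which of course holds for $i$ large.
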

\ifarxiv
\begin{proof}
    Suppose $\gamma$ is a $\mathfrak{g}$-minimizer joining  $p$ to $q$ so that $d_{\mathfrak{g}}(p,q) = L_{\mathfrak{g}}(\gamma)$. Fix some $\epsilon > 0$. By \autoref{lemma:normConvergence}, we have $\lim L_{\mathfrak{g}_i}(\gamma) = L_{\mathfrak{g}}(\gamma)$. Thus, we have some $N^1_{\epsilon} \ge 1$ so that 
    \[L_{\mathfrak{g}_i}(\gamma) \le L_{\mathfrak{g}}(\gamma) + \epsilon = d_{\mathfrak{g}}(p,q) + \epsilon, \quad i \ge N_\epsilon^1.\]
    We also have some $N_\epsilon^2 \ge 1$ so that
    \[d_{\mathfrak{g}}(p_i, p) < \epsilon, \quad d_{\mathfrak{g}}(q_i, q) < \epsilon, \quad i \ge N_\epsilon^2.\]
    Lastly, by \autoref{lemma:normConvergence}, we have some $N_\epsilon^3$ so that 
    \[\left\lVert \mathbf{v} \right\rVert _{\mathfrak{g}_i} \le (1 + \epsilon) \left\lVert \mathbf{v} \right\rVert, \quad \mathbf{v} \in TM, \quad i \ge N_\epsilon^3.\]
    Set $N_\epsilon \coloneqq \max \left\{ N_\epsilon^1, N_\epsilon^2, N_\epsilon^3 \right\}$. Let $\eta_i : [0, d_{\mathfrak{g}}(p, p_i)] \rightarrow M$ be a $\mathfrak{g}$-minimizer joining $p_i$ to $p$, and $\zeta_i : [0, d_{\mathfrak{g}}(q, q_i)] \rightarrow M$ be a $\mathfrak{g}$-minimizer joining $q$ to $q_i$. Note that, for $i \ge N_\epsilon$ we have, 
    \[L_{\mathfrak{g}_i}(\eta_i) = \int_0^{d_{\mathfrak{g}}(p, p_i)} \left\lVert \dot \eta_i \right\rVert _{\mathfrak{g}_i} dt \le (1 + \epsilon) \int_0^{d_{\mathfrak{g}}(p, p_i)} \underbrace{\left\lVert \dot \eta_i \right\rVert _{\mathfrak{g}}}_1 dt = (1+\epsilon)d_{\mathfrak{g}}(p, p_i) < (1 + \epsilon)\epsilon,\]
    and similarly, $L_{\mathfrak{g}_i}(\zeta_i) < (1 + \epsilon)\epsilon$ as well. Then, for all $i \ge N_\epsilon$ we get
    \[d_{\mathfrak{g}_i}(p_i,q_i) \le L_{\mathfrak{g}_i}(\eta_i) + L_{\mathfrak{g}_i}(\gamma) + L_{\mathfrak{g}_i}(\zeta_i) < (1 +\epsilon)\epsilon + d_{\mathfrak{g}}(p,q) + \epsilon + (1 + \epsilon)\epsilon = d_{\mathfrak{g}}(p,q) + (3 + 2\epsilon)\epsilon\]
    which implies, $\lim\sup d_{\mathfrak{g}_i}(p_i,q_i) \le d_{\mathfrak{g}}(p,q) + (3 + 2 \epsilon)\epsilon$. As $\epsilon > 0$ is arbitrary, we get 
    \begin{equation}\label{eq:limsupDistance} \tag{$*_1$}
        \lim \sup d_{\mathfrak{g}_i}(p_i,q_i) \le d_{\mathfrak{g}}(p,q).
    \end{equation}

    Next, let us consider $\mathfrak{g}_i$-minimizers $\gamma_i : [0, L_i] \rightarrow M$ joining $p_i$ to $q_i$, where $L_i \coloneqq d_{\mathfrak{g}_i}(p_i,q_i)$. Since $\lim \sup L_i \le d_{\mathfrak{g}}(p, q) < \infty$, we have $L \coloneqq \sup L_i < \infty$. Extend $\gamma_i$ by constant to $[0, L]$, i.e., $\gamma_i(t) = q_i$ for $L_i \le t \le L$. Clearly $\left\{ \gamma_i \right\}$ is a uniformly bounded family, since $M$ is compact. As $\left\lVert \cdot \right\rVert _{\mathfrak{g}_i} \rightarrow \left\lVert \cdot \right\rVert _{\mathfrak{g}}$ uniformly, we have some $N_0 \ge 1$ so that
    \[\frac{1}{2}\left\lVert \mathbf{v} \right\rVert _{\mathfrak{g}} \le \left\lVert \mathbf{v} \right\rVert _{\mathfrak{g}_i} \le \frac{3}{2} \left\lVert \mathbf{v} \right\rVert _{\mathfrak{g}}, \quad \mathbf{v} \in TM,\]
    holds for $i \ge N_0$. Now, for $0 \le t_1 \le t_2 \le L_i$ we have, 
    \[d_{\mathfrak{g}}(\gamma_i(t_1), \gamma_i(t_2)) \le \int_{t_1}^{t_2} \left\lVert \dot \gamma(t) \right\rVert _{\mathfrak{g}} dt \le 2 \int_{t_1}^{t_2} \underbrace{\left\lVert \dot \gamma(t) \right\rVert _{\mathfrak{g}_i}}_1 dt = 2(t_2 - t_1).\]
    Since $\gamma_i$ is constant for $t \ge L_i$, we see that 
    \[d_{\mathfrak{g}}(\gamma_i(t_1))d_{\mathfrak{g}}(\gamma_i(t_1), \gamma_i(t_2)) \le 2\left\lvert t_2 - t_1 \right\rvert, \quad 0 \le t_1 \le t_2 \le L, \; \text{for all } i \ge N.\]
    Thus, the family $\left\{ \gamma_i \right\}_{i \ge N_0}$ is uniformly Lipschitz. By the Arzel\`{a}-Ascoli theorem, passing to a subsequence, we see that $\gamma_i \rightarrow \eta$ uniformly, for some curve $\eta : [0, L] \rightarrow M$. As the convergence is uniform, and the family $\left\{ \gamma_i \right\}_{i \ge N_0}$ is uniformly Lipschitz, it follows that $\eta$ is a Lipschitz (and hence, rectifiable) curve. In particular, the $\mathfrak{g}$-length of $\eta$ is defined as 
    \[L_{\mathfrak{g}}(\eta) \coloneqq \sup_{0 = t_0 < \dots < t_m = L} \sum_{i=1}^m d_{\mathfrak{g}}(\eta(t_{i-1}), \eta(t_i)).\] Clearly, $\eta(0) = \lim \gamma_i(0) = \lim p_i = p$ and $\eta(L) = \lim \gamma_i(L) = \lim q_i = q$, i.e., $\eta$ is a Lipschitz curve joining $p$ to $q$. We claim that $L_{\mathfrak{g}}(\eta) \le \lim \inf L_i = \lim \inf L_{\mathfrak{g}_i}(\gamma_i)$. Fix some $\epsilon > 0$. Then, we have some partition of $[0, L]$ so that $L_{\mathfrak{g}}(\eta) - \epsilon < \sum_{j=1}^m d_{\mathfrak{g}}(\eta(t_{j-1}), \eta(t_j))$. As $\gamma_i \rightarrow \eta$ pointwise, we have some $N_\epsilon \ge 1$ such that 
    \[d_{\mathfrak{g}}(\eta(t_j), \gamma_i(t_j)) < \frac{\epsilon}{2 m}, \quad 0 \le j \le m, \; i \ge N_\epsilon.\]
    We also assume, from \autoref{lemma:normConvergence} that
    \[\left\lVert \mathbf{v} \right\rVert _{\mathfrak{g}} \le (1 + \epsilon) \left\lVert \mathbf{v} \right\rVert _{\mathfrak{g}_i}, \quad \text{for all } \mathbf{v} \in TM, \; i \ge N_\epsilon.\]
    In particular, for $i \ge N_\epsilon$ we have
    \begin{align*}
        \sum_{j=1}^m d_{\mathfrak{g}}(\gamma_i(t_{j-1}), \gamma_i(t_j)) & \le \sum_{j=1}^m \int_{t_{j-1}}^{t_j} \left\lVert \dot\gamma_i(t) \right\rVert _{\mathfrak{g}}dt = \int_0^L \left\lVert \dot \gamma_i(t) \right\rVert _{\mathfrak{g}} dt \\
        &\le (1 + \epsilon) \int_0^L \left\lVert \dot \gamma_i(t) \right\rVert _{\mathfrak{g}_i} dt = (1 + \epsilon) d_{\mathfrak{g}_i}(p_i, q_i).
    \end{align*}
    Then, for all $i \ge N_\epsilon$,  by the triangle inequality of $d_{\mathfrak{g}}$ we get 
    \[L_{\mathfrak{g}}(\eta) - \epsilon < \sum_{j = 1}^m d_{\mathfrak{g}}\left( \gamma_i(t_{j-1}), \gamma_i(t_j) \right) + \epsilon \le (1 + \epsilon)d_{\mathfrak{g}_i}(p_i,q_i) + \epsilon,\]
    which implies $L_{\mathfrak{g}}(\eta) \le \lim \inf (1 + \epsilon) d_{\mathfrak{g}_i}(p_i,q_i) + 2\epsilon$. Since $\epsilon > 0$ is arbitrary, we have 
    \begin{equation}\label{eq:liminfDistance} \tag{$*_2$}
        d_{\mathfrak{g}}(p, q) \le L_{\mathfrak{g}}(\eta) \le \lim \inf d_{\mathfrak{g}_i}(p_i,q_i).
    \end{equation}
    But then from (\hyperref[eq:limsupDistance]{$*_1$}) and (\hyperref[eq:liminfDistance]{$*_2$}), we get $d_{\mathfrak{g}}(p,q) = \lim_i d_{\mathfrak{g}_i}(p_i,q_i)$, concluding the proof.
\end{proof}
\fi

We shall need the following.
\begin{theorem}\label{thm:sakaiInjectivityRadiusConv} \cite{ehrlichInjectivityCont,sakaiInjectivityCont}
    Suppose $\mathfrak{g}_i$ is a sequence of Riemannian metrics on a compact manifold, converging to the metric $\mathfrak{g}$ in the $C^0$-topology. Then, $\lim_i \mathrm{Diam}(\mathfrak{g}_i) = \mathrm{Diam}(\mathfrak{g})$. Furthermore, if $\mathfrak{g}_i \rightarrow \mathfrak{g}$ in $C^2$-topology, then $\lim_i \mathrm{Inj}(\mathfrak{g}_i) = \mathrm{Inj}(\mathfrak{g})$. 
\end{theorem}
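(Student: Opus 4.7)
The diameter convergence is immediate from \autoref{rmk:sakaiC0Convergence}: the sandwich $(1-\epsilon_i)\mathfrak{g} \le \mathfrak{g}_i \le (1+\epsilon_i)\mathfrak{g}$ forces
$\sqrt{1-\epsilon_i}\, d_\mathfrak{g}(x, y) \le d_{\mathfrak{g}_i}(x, y) \le \sqrt{1+\epsilon_i}\, d_\mathfrak{g}(x, y)$
uniformly in $(x, y) \in M \times M$ (apply the inequality on tangent vectors, take square roots, integrate along any $C^1$ curve, then infimize over curves). Taking the supremum over $(x, y)$ then gives $\mathrm{Diam}(\mathfrak{g}_i) \to \mathrm{Diam}(\mathfrak{g})$. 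Alternatively one can extract convergent subsequences of diameter-realizing pairs and invoke \autoref{lemma:distanceConvergence}.

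For the injectivity radius, the plan is to use Klingenberg's structure theorem: on a compact manifold, $\mathrm{Inj}(\mathfrak{g}) = \min\{r_{\mathrm{conj}}(\mathfrak{g}),\, \tfrac{1}{2}\ell_{\min}(\mathfrak{g})\}$, where $r_{\mathrm{conj}}$ is the conjugate radius and $\ell_{\min}$ is the length of the shortest nontrivial closed $\mathfrak{g}$-geodesic. Under $C^2$ convergence, the Christoffel symbols of $\mathfrak{g}_i$ converge in $C^1$ and the curvature tensors in $C^0$ uniformly on compact subsets of each chart. Consequently the geodesic flows $\phi^t_{\mathfrak{g}_i}$ converge to $\phi^t_\mathfrak{g}$ uniformly on compact subsets of $TM \times \mathbb{R}$, and the Jacobi equations along them converge coefficient-wise, so $r_{\mathrm{conj}}$ depends continuously on $\mathfrak{g}$ in the $C^2$ topology. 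A similar compactness argument, combined with continuity of geodesic flows, yields continuity of $\ell_{\min}$.

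With these ingredients one treats $\limsup$ and $\liminf$ separately. For $\limsup_i \mathrm{Inj}(\mathfrak{g}_i) \le \mathrm{Inj}(\mathfrak{g})$, perturb a configuration realizing $\mathrm{Inj}(\mathfrak{g})$ (either a conjugate pair along a $\mathfrak{g}$-geodesic or a closed $\mathfrak{g}$-geodesic of length $\ell_{\min}(\mathfrak{g})$) to a comparable configuration for $\mathfrak{g}_i$, whose parameters tend to those of $\mathfrak{g}$. For $\liminf_i \mathrm{Inj}(\mathfrak{g}_i) \ge \mathrm{Inj}(\mathfrak{g})$, choose $p_i \in M$ and unit $\mathbf{n}_i \in T_{p_i}M$ with $\rho_{\mathfrak{g}_i}(p_i, \mathbf{n}_i) = \mathrm{Inj}(p_i, \mathfrak{g}_i) \to \liminf_i \mathrm{Inj}(\mathfrak{g}_i)$, pass to a subsequential limit $(p, \mathbf{n})$, and show that the limit geodesic $\gamma_\mathbf{n}^\mathfrak{g}$ has a cut point at time $\liminf_i \mathrm{Inj}(\mathfrak{g}_i)$.

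The main obstacle lies in this last step: at $\rho_{\mathfrak{g}_i}(p_i, \mathbf{n}_i)$ the geodesic either meets a conjugate point or admits a second minimizer, and one must ensure that the limit configuration still realizes one of these two alternatives. The conjugate point alternative passes to the limit once uniform $C^2$ control of the Jacobi equation is in place. The second-minimizer alternative is more delicate: the second $\mathfrak{g}_i$-minimizers form an a priori unrelated family of curves in $M$, and one extracts a uniform limit via Arzel\`a--Ascoli (as in the proof of \autoref{lemma:distanceConvergence}) to produce a second $\mathfrak{g}$-minimizer to the limit cut point. Ruling out the degenerate possibility that this limit collapses onto $\gamma_\mathbf{n}^\mathfrak{g}$ requires the Gauss lemma together with the $C^2$ control; this is the core of the Ehrlich--Sakai argument.
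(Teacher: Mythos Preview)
The paper does not prove this theorem: it is stated with citations to Ehrlich and Sakai and used as a black box thereafter, so there is no ``paper's own proof'' to compare against. Your sketch is a reasonable outline of the Ehrlich--Sakai strategy (Klingenberg's dichotomy plus $C^2$-continuity of the geodesic flow and Jacobi equation), and your diameter argument via \autoref{rmk:sakaiC0Convergence} is correct and complete. For the injectivity radius part, what you have written is accurately labeled a plan rather than a proof: the delicate point you flag at the end---that the sequence of second minimizers might collapse onto $\gamma_{\mathbf{n}}^{\mathfrak g}$ in the limit, in which case one must produce a conjugate point instead---is exactly where the work lies, and you have not actually carried it out. If you intend to supply a self-contained proof rather than cite the literature, that step (and the analogous lower-semicontinuity of the conjugate radius) would need to be filled in; otherwise, citing \cite{ehrlichInjectivityCont,sakaiInjectivityCont} as the paper does is the standard move.
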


We then have a stronger version of \autoref{lemma:distanceConvergence}.

\begin{lemma}\label{lemma:uniformConv}
    Suppose, $\mathfrak{g}_i$ is a sequence of Riemannian metrics on a compact manifold $M$, converging to the metric $\mathfrak{g}$ in $C^0$ topology, and $p_i \in M$ is a sequence of points converging to $p$. Set, $u_i \coloneqq d_{\mathfrak{g}_i}(p_i, \_)$ and $u \coloneqq d_{\mathfrak{g}}(p, \_)$. Then, $u_i \rightarrow u$ uniformly.
\end{lemma}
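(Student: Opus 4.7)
The plan is to reduce the uniform convergence statement to the pointwise result already established in \autoref{lemma:distanceConvergence} via a standard compactness-and-contradiction argument, using that the limit function $u$ is continuous.

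First I would record that $u = d_{\mathfrak{g}}(p, \_)$ is $1$-Lipschitz with respect to $d_{\mathfrak{g}}$ by the reverse triangle inequality, hence in particular continuous on the compact manifold $M$. No equicontinuity of the family $\{u_i\}$ is required for the argument I have in mind, though it would also follow from \autoref{rmk:sakaiC0Convergence}, which gives $(1-\epsilon_i)\mathfrak{g} \le \mathfrak{g}_i \le (1+\epsilon_i)\mathfrak{g}$ and hence a uniform Lipschitz bound for $u_i$ with respect to $d_{\mathfrak{g}}$.

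Next I would argue by contradiction. Suppose $u_i$ does \emph{not} converge uniformly to $u$. Then there exist $\epsilon > 0$, a subsequence $i_j$, and a sequence of points $q_{i_j} \in M$ with
\[
    \bigl| u_{i_j}(q_{i_j}) - u(q_{i_j}) \bigr| \ge \epsilon.
\]
By compactness of $M$, after passing to a further subsequence (which I still denote by $q_{i_j}$) we may assume $q_{i_j} \to q$ for some $q \in M$. Now applying \autoref{lemma:distanceConvergence} to the sequences $p_{i_j} \to p$ and $q_{i_j} \to q$ with the metrics $\mathfrak{g}_{i_j} \to \mathfrak{g}$ gives
\[
    u_{i_j}(q_{i_j}) = d_{\mathfrak{g}_{i_j}}(p_{i_j}, q_{i_j}) \longrightarrow d_{\mathfrak{g}}(p, q) = u(q),
\]
while the continuity of $u$ yields $u(q_{i_j}) \to u(q)$. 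Subtracting gives $|u_{i_j}(q_{i_j}) - u(q_{i_j})| \to 0$, contradicting the lower bound $\epsilon$ above. This contradiction forces $u_i \Rightarrow u$ uniformly.

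I do not expect any genuine obstacle here: the real work is done in \autoref{lemma:distanceConvergence}, and the only mild subtlety is making sure that the ``moving target'' $q_{i_j}$ in the evaluation of $u$ is handled correctly, which is why the continuity of the limit distance function $u$ has to be invoked explicitly in step two of the chain above.
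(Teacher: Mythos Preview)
Your argument is correct and in fact more economical than the paper's. The paper proceeds by the classical equicontinuity route: it first extracts pointwise convergence from \autoref{lemma:distanceConvergence}, then establishes that the family $\{u_i\}_{i\ge N}$ is uniformly $2$-Lipschitz with respect to $d_{\mathfrak{g}}$ (via \autoref{lemma:normConvergence}) and uniformly bounded (via the diameter convergence in \autoref{thm:sakaiInjectivityRadiusConv}), and finally invokes Arzel\`{a}--Ascoli to force any non-convergent subsequence to have a uniformly convergent sub-subsequence with limit $u$, a contradiction. You instead exploit the full strength of \autoref{lemma:distanceConvergence}, which already allows \emph{both} endpoints to move, and apply it directly to the offending sequence $q_{i_j}\to q$; combined with the continuity of $u$ this closes the contradiction in one line. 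Your route is shorter and avoids Arzel\`{a}--Ascoli and the diameter bound altogether; the paper's route has the minor advantage of explicitly recording the uniform Lipschitz constant, but that information is not used elsewhere.
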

\begin{proof}
    It is clear from \autoref{lemma:distanceConvergence} that $u_i \rightarrow u$ pointwise. By \autoref{lemma:normConvergence}, we have some $N \ge 1$ such that 
    \[\left\lVert \mathbf{v} \right\rVert _{\mathfrak{g}_i} \le 2\left\lVert \mathbf{v} \right\rVert _{\mathfrak{g}}, \quad \mathbf{v} \in TM, \; i \ge N.\]
    Now, for $x, y \in M$, consider a $\mathfrak{g}$-minimizer $\gamma : [0, L] \rightarrow M$ joining $x$ to $y$, where $L = d_{\mathfrak{g}}(x, y)$. Then, for $i \ge N$ we have
    \[\left\lvert u_i(x) - u_i(y) \right\rvert \le d_{\mathfrak{g}_i}(x, y) \le L_{\mathfrak{g}_i}(\gamma) = \int_{0}^L \left\lVert \dot \gamma \right\rVert _{\mathfrak{g}_i} \le 2 \int_0^L \left\lVert \dot \gamma \right\rVert _{\mathfrak{g}} = 2L_{\mathfrak{g}}(\gamma) = 2 d_{\mathfrak{g}}(x, y).\]
    Thus, the family $\left\{ u_i \right\} _{i \ge N}$ is uniformly Lipschitz, with respect to the fixed metric $d_{\mathfrak{g}}$. Since $\lim \mathrm{Diam}(\mathfrak{g}_i) = \mathrm{Diam} (\mathfrak{g}) < \infty$ by \autoref{thm:sakaiInjectivityRadiusConv}, by taking $N$ larger, we may assume that $\mathrm{Diam}(\mathfrak{g}_i) < K \coloneqq \mathrm{Diam}(\mathfrak{g}) + 1$ for all $i \ge N$. Consequently, the family $\left\{ u_i \right\}_{i\ge N}$ is uniformly bounded as well, as $u_i(x) = d_{\mathfrak{g}_i}(p_i, x) \le \mathrm{Diam}(\mathfrak{g}_i) < K$.

    We now proceed by contradiction. Let us assume that $u_i \not\rightarrow u$ uniformly. Then, there exists an $\epsilon > 0$ and a subfamily, $\left\{ u_{i_j} \right\}$ of $\left\{ u_i \right\}_{i \ge N}$ such that $\sup_{x \in M}\left\lvert u_{i_j}(x) - u(x) \right\rvert \ge \epsilon$. But the family $\left\{ u_{i_j} \right\}$ is both uniformly Lipschitz and uniformly bounded. Hence, by the Arzel\`{a}-Ascoli theorem, we have a uniformly convergent subsequence $u_{i_{j_k}}$, which necessarily converges to $u$. This is a contradiction. Hence, $u_i \rightarrow u$ uniformly.
\end{proof}

\subsection{Viscosity Solution}
In this section, we recall the notion of viscosity solutions of the eikonal equation, we refer to \cite{lionsUserGuideViscosity} for details. Let $(M, \mathfrak{g})$ be a Riemannian manifold. Given an open set $\Omega \subset M$, consider the eikonal equation on $\Omega$
\begin{equation}\label{eq:eikonal}
    \left\lVert \nabla^{\mathfrak{g}} u \right\rVert _{\mathfrak{g}} = 1 \quad \left( \text{or equivalently,} \quad \left\lVert du \right\rVert _{\mathfrak{g}} = 1\right),
\end{equation}
where $\nabla^{\mathfrak{g}}$ is the gradient operator associated to $\mathfrak{g}$.

\begin{defn}\label{defn:viscositySoln}
    A continuous map $u : \Omega \rightarrow \mathbb{R}$ is said to be a \emph{viscosity} solution of \autoref{eq:eikonal} at $x_0 \in \Omega$ if it satisfies the following.
    \begin{itemize}
        \item $u$ is a viscosity \emph{sub-solution} at $x_0$ : for any $C^1$ function $\varphi$ near $x_0$, such that $u - \varphi$ attains a local maximum at $x_0$ (or equivalently, $u - \varphi \le 0$ and $u(x_0) = \varphi(x_0)$), we have $\left\lVert \nabla^{\mathfrak{g}} \varphi \right\rVert _{\mathfrak{g}} \le 1$ at $x_0$.
        \item $u$ is a viscosity \emph{super-solution} at $x_0$ : for any $C^1$ function $\psi$ near $x_0$, such that $u - \psi$ attains a local minimum at $x_0$ (or equivalently, $u - \psi \ge 0$ and $u(x_0) = \psi(x_0)$), we have $\left\lVert \nabla^{\mathfrak{g}} \psi \right\rVert _{\mathfrak{g}} \ge 1$ at $x_0$.
    \end{itemize}
\end{defn}

\begin{remark}\label{rmk:strictExtrema}
    Note that without loss of generality, we can ask for a \emph{strict} local maximum at $x_0$ in the definition above. Indeed, suppose $u - \phi$ attains a local maximum at $x_0$ for some $C^1$ map $\phi$ defined near $x_0$. Define $\varphi(x) \coloneqq \phi(x) + \left( d_{\mathfrak{g}}(x_0, x) \right)^2$. Clearly, $\varphi$ is $C^1$ in a small neighborhood of $U$, and furthermore, $d_{x_0} \varphi = d_{x_0} \phi$. Now, for $x \ne x_0$ we have, 
    \[(u-\varphi)(x) = (u - \phi)(x) - d_{\mathfrak{g}}(x_0, x)^2 \le u(x_0) - \underbrace{ d_{\mathfrak{g}}(x_0, x)^2 }_{ \gneq 0} < u(x_0),\]
    and $(u - \varphi)(x_0) = (u - \phi)(x_0)$. Thus, $u - \varphi$ attains a strict local maximum at $x_0$, while having $\left\lVert \nabla^{\mathfrak{g}}\phi \right\rVert = \left\lVert \nabla^{\mathfrak{g}} \varphi \right\rVert$ at $x_0$. Similarly, if $u - \psi$ attains a local minimum at $x_0$, it follows that $u - \varPsi$ attains a \emph{strict} local minimum at $x_0$, with $d_{x_0} \psi = d_{x_0} \varPsi$, where $\Psi(x) \coloneqq \psi(x) - \left( d_{\mathfrak{g}}(x_0, x) \right)^2$.
\end{remark}

Next, let us consider the Dirichlet problem 
\begin{equation}\label{eq:dirichletEikonal}
    \left\lVert d u \right\rVert _{\mathfrak{g}} = 1, \quad u|_{\partial \Omega} = 0.
\end{equation}
We have the following existence and uniqueness result for the viscosity solution of the above.

\begin{theorem}\cite[Theorem 3.1]{mantegazzaViscosityManifold} \label{thm:uniqueViscositySolution}
    Suppose $\Omega$ is bounded (which is the case since $M$ is compact). Then, the distance function $u \coloneqq d_{\mathfrak{g}}(\partial \Omega,\_ )$ on $\Omega$ is the unique viscosity solution (among the positive functions) for the Dirichlet problem \autoref{eq:dirichletEikonal}. The only other solution being $-u$.
\end{theorem}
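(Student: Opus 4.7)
The plan has two parts: (i) verifying that $u \coloneqq d_{\mathfrak{g}}(\partial \Omega, \_)$ is itself a viscosity solution of \autoref{eq:dirichletEikonal}, and (ii) proving uniqueness among positive solutions via a comparison principle.

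For (i), the sub-solution property rests on the fact that $u$ is 1-Lipschitz with respect to $d_{\mathfrak{g}}$, which follows from the triangle inequality applied to distances from $\partial \Omega$. If $\varphi \in C^1$ is a test function with $u - \varphi$ attaining a local maximum at $x_0 \in \Omega$, then locally $\varphi(x) - \varphi(x_0) \ge u(x) - u(x_0) \ge -d_{\mathfrak{g}}(x, x_0)$. Testing along $\gamma_{\mathbf{v}}^{\mathfrak{g}}$ for a unit vector $\mathbf{v} \in T_{x_0}M$ and letting $t \to 0^+$ yields $d_{x_0}\varphi(\mathbf{v}) \ge -1$; using both $\mathbf{v}$ and $-\mathbf{v}$ gives $|d_{x_0}\varphi(\mathbf{v})| \le 1$ for every unit $\mathbf{v}$, hence $\left\lVert \nabla^{\mathfrak{g}}\varphi(x_0) \right\rVert_{\mathfrak{g}} \le 1$. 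For the super-solution property, at each $x_0 \in \Omega$ pick a unit-speed $\partial \Omega$-minimizer $\sigma : [0, L] \rightarrow \overline{\Omega}$ with $\sigma(L) = x_0$ and $L = u(x_0) > 0$, noting $u(\sigma(t)) = t$ along $\sigma$. If $u - \psi$ attains a local minimum at $x_0$ with $\psi \in C^1$, then $\psi(\sigma(t)) - \psi(x_0) \le u(\sigma(t)) - u(x_0) = t - L$; dividing by $L - t > 0$ and letting $t \uparrow L$ forces $-d_{x_0}\psi(\dot\sigma(L)) \ge 1$, so $\left\lVert \nabla^{\mathfrak{g}}\psi(x_0) \right\rVert_{\mathfrak{g}} \ge 1$.

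For (ii), the core tool is the comparison principle: if $v$ is a viscosity sub-solution and $w$ a viscosity super-solution of $\left\lVert \nabla^{\mathfrak{g}}(\cdot) \right\rVert_{\mathfrak{g}} = 1$ on $\Omega$ with $v \le w$ on $\partial \Omega$, then $v \le w$ on $\overline{\Omega}$. I would follow the classical doubling-of-variables strategy, analyzing the maximum $(x_\lambda, y_\lambda)$ of
\[
\Phi_\lambda(x, y) \coloneqq v(x) - w(y) - \tfrac{\lambda}{2}\, d_{\mathfrak{g}}(x, y)^2
\]
on $\overline{\Omega} \times \overline{\Omega}$ for large $\lambda > 0$. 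If $v > w$ somewhere, the penalization pushes $(x_\lambda, y_\lambda)$ into $\Omega \times \Omega$ for large $\lambda$, with $d_{\mathfrak{g}}(x_\lambda, y_\lambda) \to 0$ while $\lambda\, d_{\mathfrak{g}}(x_\lambda, y_\lambda)^2$ remains bounded away from zero. Testing the sub-solution inequality at $x_\lambda$ against $x \mapsto \tfrac{\lambda}{2} d_{\mathfrak{g}}(x, y_\lambda)^2$ and the super-solution inequality at $y_\lambda$ against $y \mapsto -\tfrac{\lambda}{2} d_{\mathfrak{g}}(x_\lambda, y)^2$ produces opposing gradient bounds that become incompatible as $\lambda \to \infty$. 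With the principle in hand, any positive viscosity solution $v_0$ of \autoref{eq:dirichletEikonal} is simultaneously a sub- and super-solution that agrees with $u$ on $\partial \Omega$, so pairing against $u$ in both directions forces $v_0 = u$. The negative counterpart $-u$ is Lipschitz, vanishes on $\partial \Omega$, and satisfies $\left\lVert \nabla^{\mathfrak{g}}(-u) \right\rVert_{\mathfrak{g}} = 1$ almost everywhere, accounting for the other solution mentioned in the statement.

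The main obstacle is the Riemannian comparison principle: while doubling of variables is standard in Euclidean domains, adapting the penalty $d_{\mathfrak{g}}^2$ to a manifold setting requires its $C^2$ regularity in a tubular neighborhood of the diagonal and careful control of its gradient via Jacobi/geodesic variation, which is the technical content drawn from \cite{mantegazzaViscosityManifold}.
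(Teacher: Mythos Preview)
The paper does not prove this theorem at all: it is quoted verbatim from \cite[Theorem~3.1]{mantegazzaViscosityManifold} and used as a black box, so there is no ``paper's own proof'' to compare against. Your outline is the standard route taken in \cite{mantegazzaViscosityManifold}: verify that the distance function is a viscosity solution by exploiting $1$-Lipschitzness for the sub-solution side and a minimizing $\partial\Omega$-geodesic for the super-solution side, then obtain uniqueness from a comparison principle proved by doubling of variables with the penalty $\tfrac{\lambda}{2}d_{\mathfrak{g}}(x,y)^2$. In that sense your plan is correct and aligned with the cited source.

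Two small points are worth flagging. First, your description of the contradiction in the comparison argument is not quite right: testing at $(x_\lambda,y_\lambda)$ yields $\lambda\, d_{\mathfrak{g}}(x_\lambda,y_\lambda)\le 1$ from the sub-solution and $\lambda\, d_{\mathfrak{g}}(x_\lambda,y_\lambda)\ge 1$ from the super-solution, which are \emph{compatible}. The usual fix is to replace the sub-solution $v$ by $\mu v$ for $\mu\in(0,1)$ (still vanishing on $\partial\Omega$, still exceeding $w$ somewhere for $\mu$ close to $1$), so that the sub-solution inequality becomes $\lambda\, d_{\mathfrak{g}}(x_\lambda,y_\lambda)\le \mu<1$, which does contradict the super-solution bound; then let $\mu\uparrow 1$. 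Second, your justification that $-u$ is ``the other solution'' because it satisfies $\lVert\nabla^{\mathfrak{g}}(-u)\rVert_{\mathfrak{g}}=1$ almost everywhere is insufficient: a.e.\ satisfaction does not imply the viscosity property, and in fact for the equation $\lVert\nabla^{\mathfrak{g}} f\rVert_{\mathfrak{g}}=1$ with the sign conventions of \autoref{defn:viscositySoln}, $-u$ typically fails the super-solution test at interior maxima of $u$ (e.g.\ $u=1-|x|$ on $(-1,1)$). That clause in the statement is an informal remark inherited from the reference rather than something your argument needs to (or can) establish in this form; it does not affect the use made of \autoref{thm:uniqueViscositySolution} elsewhere in the paper.
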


Let us denote the sets
\begin{equation}\label{eq:insideCutSepLocus}
    \mathrm{Cut}^{\mathsf{in}}(\Omega, \mathfrak{g}) \coloneqq \mathrm{Cut}(\partial \Omega, \mathfrak{g}) \cap \Omega, \quad \mathrm{Sep}^{\mathsf{in}}(\Omega, \mathfrak{g}) \coloneqq \mathrm{Sep}(\partial \Omega, \mathfrak{g}) \cap \Omega.
\end{equation}
Then, the distance function $u \coloneqq d_{\mathfrak{g}}(\partial \Omega, \_)$ is not $C^1$ on points of $\mathrm{Sep}^{\mathsf{in}}(\Omega, \mathfrak{g})$, and it is smooth on $\Omega \setminus \mathrm{Cut}^{\mathsf{in}}(\Omega, \mathfrak{g})$. It follows from \autoref{eq:sepIsDenseInCut}, that the $C^1$ singular support of $u$ inside $\Omega$ is precisely the set $\mathrm{Cut}^{\mathsf{in}}(\Omega, \mathfrak{g})$. We shall need the following characterization. We provide a proof, though it may already be known to experts, while referring to \cite{mantegazzaViscosityManifold} for the terminology used within.
\begin{prop}\label{prop:characterizationOfSepPoint} 
    $x \in \mathrm{Sep}^{\mathsf{in}}(\Omega, \mathfrak{g})$ if and only if there exists $C^1$ function $\varphi$ near $x$ such that $u - \varphi$ attains a local maximum at $x$ and $\left\lVert d \varphi \right\rVert _{\mathfrak{g}} < 1$ at $x$ (or equivalently, $\left\lVert \nabla^{\mathfrak{g}}_x \varphi \right\rVert _{\mathfrak{g}} < 1$), where $u \coloneqq d_{\mathfrak{g}}\left( \partial \Omega, \_ \right)$.
\end{prop}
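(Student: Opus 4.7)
The plan is to treat the two implications separately.

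For $(\Rightarrow)$, I use a convexity trick based on two explicit smooth upper barriers. Given $x \in \mathrm{Se}^{\mathsf{in}}(\Omega, \mathfrak{g})$, pick two distinct unit-speed $\partial\Omega$-minimizers $\gamma_1, \gamma_2 : [0, L] \to M$ ending at $x$ (so $L = u(x) > 0$), and set $\mathbf{v}_i := \dot\gamma_i(L) \in T_x M$; these are distinct unit vectors. Fix $0 < \epsilon < \min\{L, \mathrm{Inj}(\mathfrak{g})\}$ and put $q_i := \gamma_i(L - \epsilon)$. Since $d_{\mathfrak{g}}(q_i, x) = \epsilon < \mathrm{Inj}(\mathfrak{g})$, the function $d_{\mathfrak{g}}(q_i, \cdot)$ is smooth near $x$, with $\mathfrak{g}$-gradient $\mathbf{v}_i$ at $x$ (the terminal velocity of the unique minimizer $\gamma_i|_{[L-\epsilon, L]}$ from $q_i$ to $x$). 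Defining $\varphi_i(y) := (L - \epsilon) + d_{\mathfrak{g}}(q_i, y)$, the triangle inequality together with $d_{\mathfrak{g}}(\partial\Omega, q_i) \le L - \epsilon$ yields $u \le \varphi_i$ globally, with equality at $x$. Setting $\varphi := \tfrac{1}{2}(\varphi_1 + \varphi_2)$, the difference $u - \varphi$ attains a global (hence local) maximum at $x$, while
\[ \nabla_x \varphi = \tfrac{1}{2}(\mathbf{v}_1 + \mathbf{v}_2) \]
has $\mathfrak{g}$-norm strictly less than $1$ because $\mathbf{v}_1 \ne \mathbf{v}_2$ are distinct unit vectors.

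For $(\Leftarrow)$, assume $x \notin \mathrm{Se}^{\mathsf{in}}(\Omega, \mathfrak{g})$, so the $\partial\Omega$-minimizer $\gamma$ to $x$ is unique, with terminal velocity $\mathbf{v} = \dot\gamma(L)$. I would appeal to the classical nonsmooth analysis of distance functions: $u = d_{\mathfrak{g}}(\partial\Omega, \cdot)$ is locally semiconcave in $\Omega$, and its Fr\'echet superdifferential satisfies $D^+u(x) = \mathrm{conv}\{\mathfrak{g}(\mathbf{v}_i, \cdot) : \gamma_i \text{ a $\partial\Omega$-minimizer ending at } x\}$. Uniqueness collapses this to $D^+u(x) = \{\mathfrak{g}(\mathbf{v}, \cdot)\}$, and for a semiconcave function a singleton superdifferential is equivalent to Fr\'echet differentiability; so $u$ is differentiable at $x$ with $du_x = \mathfrak{g}(\mathbf{v}, \cdot)$ and $\|du_x\|_{\mathfrak{g}} = 1$. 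Then for any $C^1$ function $\varphi$ with $u - \varphi$ attaining a local maximum at $x$, first-order Taylor expansions of both $u$ and $\varphi$ force $d\varphi_x = du_x$, so $\|d\varphi_x\|_{\mathfrak{g}} = 1$, contradicting the hypothesis $\|d\varphi_x\|_{\mathfrak{g}} < 1$.

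The main obstacle is the semiconcavity and superdifferential input in $(\Leftarrow)$: it is classical (cf.\ Cannarsa--Sinestrari, or the cited Mantegazza--Mennucci) but nontrivial and worth invoking carefully. In the easy subcase $x \notin \mathrm{Cu}^{\mathsf{in}}(\Omega, \mathfrak{g})$ the function $u$ is already smooth at $x$ with $\|du_x\|_{\mathfrak{g}} = 1$, and the contradiction is immediate via the viscosity subsolution inequality $\|d\varphi_x\|_{\mathfrak{g}} \le 1$ plus the identification $d\varphi_x = du_x$. The genuinely delicate case is $x \in \mathrm{Cu}^{\mathsf{in}} \setminus \mathrm{Se}^{\mathsf{in}}$, where $x$ is a conjugate point along the unique minimizer; it is precisely here that the semiconcavity machinery is needed to conclude that $u$ is nonetheless Fr\'echet differentiable at $x$, since the ``going forward along $\gamma$'' direct route (which would force $\mathfrak{g}(\nabla_x\varphi, \mathbf{v}) \ge 1$) fails when $u$ stops being linear past $x$.
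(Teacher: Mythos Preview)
Your proof is correct, and the $(\Leftarrow)$ direction is essentially identical to the paper's: both invoke local semiconcavity of $u$ (from Mantegazza--Mennucci) to conclude that when the $\partial\Omega$-minimizer to $x$ is unique, the superdifferential $D^+u(x)$ collapses to a singleton and $u$ is differentiable at $x$ with $\lVert du_x\rVert_{\mathfrak{g}}=1$, forcing $d\varphi_x=du_x$ for any upper test function $\varphi$.

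The $(\Rightarrow)$ direction differs in presentation. The paper argues abstractly: it notes that $D^+u(x)$ is nonempty and convex, that $u$ is nondifferentiable precisely on $\mathrm{Se}^{\mathsf{in}}(\Omega,\mathfrak{g})$, and then uses strict convexity of the norm to deduce that $D^+u(x)$ contains at least two vectors if and only if it contains one of norm strictly less than $1$. You instead \emph{construct} two explicit elements of $D^+u(x)$ --- the gradients $\mathbf{v}_i$ of the smooth upper barriers $\varphi_i=(L-\epsilon)+d_{\mathfrak{g}}(q_i,\cdot)$ --- and average them to exhibit the desired test function directly. Your version is more hands-on and makes the two supergradients visible (in effect, it reproves one half of the statement ``$u$ is nondifferentiable precisely on $\mathrm{Se}^{\mathsf{in}}$'' that the paper takes as given), while the paper's packaging dispatches both implications in a single equivalence chain. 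The underlying mathematics is the same; your approach buys a little extra transparency in the forward direction at the cost of treating the two implications asymmetrically.
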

\begin{proof}
    Consider the Hamiltonian $H : TM \rightarrow \mathbb{R}$ given by $H(x, \mathbf{v}) = \left\lVert \mathbf{v} \right\rVert _{\mathfrak{g}}- 1 = \sqrt{\mathfrak{g}_x (\mathbf{v}, \mathbf{v})} - 1$. Observe that for any $\mathbf{u} \ne \mathbf{v} \in T_x M$ with $\left\lVert \mathbf{u} \right\rVert _{\mathfrak{g}} = 1 = \left\lVert \mathbf{v} \right\rVert _{\mathfrak{g}}$ we have the \emph{strict} inequality 
    \[H(x, \lambda \mathbf{u} + (1 - \lambda) \mathbf{v}) < \lambda H(x, \mathbf{u}) + (1 - \lambda) H(x, \mathbf{v}), \quad \lambda \in (0, 1).\]
    Indeed, it follows from the triangle inequality, which is strict since $\mathbf{u}$ and $\mathbf{v}$ being distinct unit vectors, cannot be collinear. Next, consider the set of \emph{supergradients}
    \[D^{+}u(x) \coloneqq \left\{ \nabla^{\mathfrak{g}}_x \varphi \;\middle|\; \text{$\varphi$ is $C^1$ near $x$, and $u - \varphi$ attains a local maximum at $x$.} \right\} \subset T_x M.\]
    If $u$ is differentiable at $x$, then $D^+ u(x) = \left\{ \nabla^{\mathfrak{g}}_x u \right\}$, whence $\left\lVert \nabla^{\mathfrak{g}}_x u \right\rVert = 1$. Moreover, as the distance function $u$ is \emph{locally semiconcave} in $\Omega$ \cite[Proposition 3.4]{mantegazzaViscosityManifold}, it follows that $D^+ u(x) \ne \emptyset$ for all $x \in \Omega$ \cite[Proposition 2.8]{mantegazzaViscosityManifold}. It is easy to see that $D^+ u(x)$ is convex: if for some $C^1$ functions $\varphi, \Psi$ near $x$, both $u - \varphi$ and $u - \varPsi$ attains a local maximum at $x$, then so does the function $\zeta_\lambda \coloneqq \lambda \varphi + (1 - \lambda) \varPsi$ for $\lambda \in [0, 1]$, and hence, $\nabla^{\mathfrak{g}}_x \zeta_\lambda = \lambda \nabla^{\mathfrak{g}}_x \varphi + (1 - \lambda) \nabla^{\mathfrak{g}}_x \varPsi \in D^{+} u(x)$. Thus, it follows from the strict convexity of $H$ that $D^+ u(x)$ contains at least two vectors if and only if there exists some $\varphi$ such that $u - \varphi$ attains a local maximum at $x$ and $\left\lVert \nabla^{\mathfrak{g}}_x \varphi \right\rVert _{\mathfrak{g}} < 1$.   Since $u$ is nondifferentiable precisely in $\mathrm{Sep}^{\mathsf{in}}(\Omega, \mathfrak{g})$, the claim follows immediately.
\end{proof}

We also have the following stability theorem.
\begin{theorem}\cite{lionsUserGuideViscosity} \label{thm:stabilityViscositySolution} 
    Let $\mathfrak{g}_i$ be a sequence of Riemannian metrics on a compact manifold $M$, converging to $\mathfrak{g}$ in the $C^0$ sense. Let $u_i$ be a viscosity solution defined on $\Omega \subset M$ to the eikonal equation $\left\lVert d f \right\rVert _{\mathfrak{g}_i} = 1$. Suppose $u_i \rightarrow u$ uniformly on compact sets. Then, $u$ is a viscosity solution on $\Omega$ to the eikonal equation $\left\lVert d f \right\rVert _{\mathfrak{g}} = 1$.
\end{theorem}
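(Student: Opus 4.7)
The plan is to establish the sub-solution and super-solution conditions separately, both by the standard test-function perturbation argument adapted to the variable-metric setting. Fix $x_0 \in \Omega$ and a $C^1$ test function $\varphi$ defined on a neighborhood of $x_0$. By \autoref{rmk:strictExtrema} I may, and will, assume the relevant extremum of $u - \varphi$ at $x_0$ is \emph{strict}, which is what makes the perturbation argument work.

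For the sub-solution property, suppose $u - \varphi$ attains a strict local maximum at $x_0$. Choose $r > 0$ small enough that $\overline{B(x_0, r)} \subset \Omega$ lies inside the domain of $\varphi$ and $x_0$ is the unique maximizer of $u - \varphi$ on this closed ball. On this compact set the continuous function $u_i - \varphi$ attains its maximum at some $x_i$. I claim $x_i \to x_0$: passing to any convergent subsequence $x_{i_j} \to x^*$, the uniform convergence $u_i \Rightarrow u$ combined with the continuity of $\varphi$ gives, for every $x \in \overline{B(x_0, r)}$,
\[
u(x) - \varphi(x) = \lim_j \bigl(u_{i_j}(x) - \varphi(x)\bigr) \le \lim_j \bigl(u_{i_j}(x_{i_j}) - \varphi(x_{i_j})\bigr) = u(x^*) - \varphi(x^*),
\]
so $x^*$ is a maximizer, whence $x^* = x_0$ by strictness. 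In particular $x_i$ lies in the interior of the ball for large $i$, so $x_i$ is a genuine local maximum of $u_i - \varphi$. Since $u_i$ is a viscosity sub-solution of $\|df\|_{\mathfrak{g}_i} = 1$, we conclude $\|d_{x_i}\varphi\|_{\mathfrak{g}_i} \le 1$ for all large $i$.

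The final step is to pass to the limit in this inequality. Here I invoke \autoref{lemma:normConvergence}: since $\mathfrak{g}_i \to \mathfrak{g}$ in $C^0$, the induced norms $\|\cdot\|_{\mathfrak{g}_i}$ converge to $\|\cdot\|_{\mathfrak{g}}$ uniformly on compact subsets of $T^*M$ (formally on $TM$, but the dual statement is equivalent via the bundle isomorphism induced by each $\mathfrak{g}_i$ and their uniform convergence). Because $x_i \to x_0$ and $\varphi$ is $C^1$, the covector $d_{x_i}\varphi$ converges to $d_{x_0}\varphi$ in $T^*M$, so $\|d_{x_i}\varphi\|_{\mathfrak{g}_i} \to \|d_{x_0}\varphi\|_\mathfrak{g}$. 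Combining with the inequality $\|d_{x_i}\varphi\|_{\mathfrak{g}_i} \le 1$ gives $\|d_{x_0}\varphi\|_\mathfrak{g} \le 1$, establishing the sub-solution condition.

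The super-solution case is entirely symmetric: given a $C^1$ function $\psi$ with $u - \psi$ attaining a strict local minimum at $x_0$, pick minimizers $y_i$ of $u_i - \psi$ on a small closed ball, argue $y_i \to x_0$ by the same convergent-subsequence plus strictness argument, apply the super-solution property of $u_i$ to obtain $\|d_{y_i}\psi\|_{\mathfrak{g}_i} \ge 1$, and pass to the limit. I expect the main subtlety to be nothing deep, but rather the careful verification that the dual norms converge uniformly (so that pointwise convergence of $x_i$ plus the $C^1$-regularity of the test function really does give convergence of $\|d_{x_i}\varphi\|_{\mathfrak{g}_i}$); this is a minor extension of \autoref{lemma:normConvergence} applied to the cometric. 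Everything else is the textbook viscosity stability argument, with the only novelty being that the Hamiltonian itself depends on $i$ through $\mathfrak{g}_i$, a variation that is absorbed cleanly by the $C^0$-convergence hypothesis.
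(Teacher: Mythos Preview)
Your proposal is correct and follows essentially the same approach as the paper: reduce to a strict extremum via \autoref{rmk:strictExtrema}, locate maximizers $x_i$ of $u_i-\varphi$ on a small closed ball, show $x_i\to x_0$, apply the viscosity property of $u_i$, and pass to the limit using \autoref{lemma:normConvergence}. The only cosmetic difference is that the paper establishes $x_i\to x_0$ by an explicit $\epsilon$--$\delta_\epsilon$ estimate comparing boundary and center values, whereas you use the cleaner compactness/subsequence argument; both are standard and equivalent, and your remark about the dual-norm convergence is a point the paper leaves implicit.
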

\begin{proof}
    Since $\mathfrak{g}_i \rightarrow \mathfrak{g}$ in the $C^0$ sense, we have $\left\lVert \cdot \right\rVert _{\mathfrak{g}_i} \rightarrow \left\lVert \cdot \right\rVert _{\mathfrak{g}}$ uniformly on $TM$ (\autoref{lemma:normConvergence}). Fix some $x_0 \in \Omega$. Suppose, $\varphi$ is a smooth function defined on a neighborhood $x_0 \in U \subset \Omega$ such that $u - \varphi$ attains a local maximum at $x_0$. As observed in \autoref{rmk:strictExtrema}, we assume that $u - \varphi$ attains a strict local maximum at $x_0$. Then, for $\epsilon > 0$ small, we have some $\delta_\epsilon > 0$ such that 
    \[(u - \varphi)(x) + \delta_\epsilon < (u - \varphi)(x_0), \quad \text{whenever } d_{\mathfrak{g}}(x_0, x) = \epsilon.\]
    Since $u_i \rightarrow u$ uniformly on compacts, we have some $N_\epsilon \ge 1$ such that 
    \[u(x) - \frac{\delta_\epsilon}{2} < u_i(x) < u(x) + \frac{\delta_\epsilon}{2}, \quad d_{\mathfrak{g}}(x_0, x) \le \epsilon, \; i \ge N_\epsilon.\]
    Hence, for any $i \ge N_\epsilon$ and for $d_{\mathfrak{g}}(x_0, x) = \epsilon$, we have
    \begin{align*}
        (u_i - \varphi)(x) = (u_i -u)(x) + (u - \varphi)(x) &< \frac{\delta_\epsilon}{2} + (u - \varphi)(x_0) - \delta_\epsilon \\
        &= (u - \varphi)(x_0) - \frac{\delta_\epsilon}{2} < (u_i - \varphi)(x_0).
    \end{align*}
    But then, $(u_i - \varphi)$ attains a local maximum at some $x_i$ with $d_{\mathfrak{g}}(x_0, x_i) < \epsilon$, whenever $i \ge N_\epsilon$. Taking $\epsilon \rightarrow 0$, we can then have a subsequence $x_i \in U$, with $x_i \rightarrow x_0$, so that $(u_i - \varphi)$ attains a local maximum at $x_i$. Now, since $u_i$ is a viscosity subsolution of $\left\lVert d f \right\rVert _{\mathfrak{g}_{i}} = 1$, we have 
    \[\left\lVert d_{x_{i}} \varphi \right\rVert _{\mathfrak{g}_{i}} \le 1.\]
    As $\left\lVert \cdot \right\rVert _{\mathfrak{g}_i} \rightarrow \left\lVert \cdot \right\rVert _{\mathfrak{g}}$ uniformly, taking $i \rightarrow \infty$, we have $\left\lVert d_{x_0}\varphi \right\rVert _{\mathfrak{g}} \le 1$. But then, $u$ is a subsolution. Arguing similarly, we can show that $u$ is a supersolution as well. Hence, $u$ is a viscosity solution to the eikonal equation $\left\lVert df \right\rVert _{\mathfrak{g}} = 1$.
\end{proof}

\section{Continuity of the Cut Locus map} \label{sec:continuity}
Let $M$ be a compact manifold, without boundary. Fix an auxiliary Riemannian metric $\mathfrak{h}$ on $M$. We consider the following collections:
\begin{itemize}
    \item $\mathcal{G} = \mathcal{G}(M) \coloneqq \left\{ \mathfrak{g} \in \Gamma T^*M \odot T^*M \;\middle|\; \text{$\mathfrak{g}$ is a $C^2$ Riemannian metric} \right\}$, and
    \item $\mathcal{K} = \mathcal{K}(M) \coloneqq \left\{ K \subset M \;\middle|\; \text{$K$ is compact} \right\}$.
\end{itemize}
Note that as $M$ is compact, every Riemannian metric $\mathfrak{g}$ on $M$ is complete. We consider the $C^2$ Whitney topology on $\mathcal{G}$. For $A, B \in \mathcal{K}$, define the \emph{Hausdorff metric} as 
\begin{equation}\label{eq:hausdorffMetric}
    d_H(A, B) \coloneqq \max \left\{ \sup_{a \in A} d_{\mathfrak{h}}(a, B), \; \sup_{b \in B} d_{\mathfrak{h}}(A, b) \right\} = \max \left\{ \sup_{a \in A} \inf_{b \in B} d_{\mathfrak{h}}(a, b), \; \sup_{b \in B} \inf_{a \in A} d_{\mathfrak{h}}(a, b) \right\}.
\end{equation}
The topology on $\mathcal{K}$ induced by $d_H$ is independent of the choice of the metric $\mathfrak{h}$. For $(p, \mathfrak{g}) \in M \times \mathcal{G}$, the cut locus $\mathrm{Cut}(p, \mathfrak{g})$ of $p$ with respect to the metric $\mathfrak{g}$ is a closed subset of $M$, and hence it is compact. In other words, we have a map
\begin{equation}\label{eq:cutLocusMap}
    \begin{aligned}
        \mathrm{Cut} : M \times \mathcal{G} &\rightarrow \mathcal{K} \\
        (p, \mathfrak{g}) &\mapsto \mathrm{Cut}(p, \mathfrak{g})
    \end{aligned}
\end{equation}
The main goal of this section is to prove the following.

\begin{theorem}\label{thm:cutLocusMapContinuous}
    The cut locus map $\mathrm{Cut} : M \times \mathcal{G} \rightarrow \mathcal{K}$ is continuous.
\end{theorem}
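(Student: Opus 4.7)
The plan is to apply the characterization of Hausdorff convergence from \autoref{lemma:hausdorffConvEquivalence} to a given convergent sequence $(p_i, \mathfrak{g}_i) \to (p, \mathfrak{g})$. Concretely, I will establish (a) for every $q \in \mathrm{Cu}(p, \mathfrak{g})$ there exist $q_i \in \mathrm{Cu}(p_i, \mathfrak{g}_i)$ with $q_i \to q$, and (b) the limit of any convergent subsequence $q_{i_j} \in \mathrm{Cu}(p_{i_j}, \mathfrak{g}_{i_j})$ lies in $\mathrm{Cu}(p, \mathfrak{g})$. Since $\mathrm{Inj}(\mathfrak{g}_i) \to \mathrm{Inj}(\mathfrak{g}) > 0$ by \autoref{thm:sakaiInjectivityRadiusConv}, I fix $\delta > 0$ strictly below all these injectivity radii and set $\Omega_i := M \setminus \overline{B_{\mathfrak{g}_i}(p_i, \delta)}$ and $\Omega := M \setminus \overline{B_{\mathfrak{g}}(p, \delta)}$. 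By \eqref{eq:cutLocusOfSphere} and the choice of $\delta$, one has $\mathrm{Cu}^{\mathsf{in}}(\Omega_i, \mathfrak{g}_i) = \mathrm{Cu}(p_i, \mathfrak{g}_i)$ and likewise for the limit; moreover the shifted distance functions $u_i := d_{\mathfrak{g}_i}(p_i, \cdot) - \delta$ and $u := d_{\mathfrak{g}}(p, \cdot) - \delta$ are the positive viscosity solutions of the Dirichlet eikonal problem on $\Omega_i$ and $\Omega$ (\autoref{thm:uniqueViscositySolution}), with $u_i \rightrightarrows u$ uniformly on $M$ by \autoref{lemma:uniformConv}.

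For (a), the density \eqref{eq:sepIsDenseInCut} combined with a diagonal argument reduces the problem to $q \in \mathrm{Se}(p, \mathfrak{g})$. \autoref{prop:characterizationOfSepPoint} (together with \autoref{rmk:strictExtrema}) supplies a $C^1$ function $\varphi$ near $q$ such that $u - \varphi$ attains a strict local maximum at $q$ and $\|d_q \varphi\|_{\mathfrak{g}} < 1$. On a small closed ball $\overline{B}$ around $q$ the maximum of $u - \varphi$ on $\partial B$ is strictly less than $(u - \varphi)(q)$, so the uniform convergence $u_i \rightrightarrows u$ forces $u_i - \varphi$ to attain its maximum on $\overline{B}$ at an interior point $q_i$ for large $i$; shrinking the radius along a diagonal produces $q_i \to q$. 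Since $\|\cdot\|_{\mathfrak{g}_i} \rightrightarrows \|\cdot\|_{\mathfrak{g}}$ on compacts by \autoref{lemma:normConvergence}, we have $\|d_{q_i}\varphi\|_{\mathfrak{g}_i} < 1$ for large $i$, and a second appeal to \autoref{prop:characterizationOfSepPoint} places $q_i \in \mathrm{Se}(p_i, \mathfrak{g}_i) \subset \mathrm{Cu}(p_i, \mathfrak{g}_i)$, as required.

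For (b), suppose toward contradiction that $q_i \in \mathrm{Cu}(p_i, \mathfrak{g}_i)$ converge to some $q \notin \mathrm{Cu}(p, \mathfrak{g})$. Using $\mathrm{Cu}(p_i, \mathfrak{g}_i) = \overline{\mathrm{Se}(p_i, \mathfrak{g}_i)}$ I replace each $q_i$ by a nearby separating point (still converging to $q$), so two distinct unit-speed $\mathfrak{g}_i$-minimizers $\gamma_i, \eta_i : [0, L_i] \to M$ from $p_i$ to $q_i$ are available. The lengths $L_i$ are uniformly bounded by $\mathrm{Diam}(\mathfrak{g}_i)$, itself convergent by \autoref{thm:sakaiInjectivityRadiusConv}. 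Working in a fixed coordinate chart around $p$ to identify nearby tangent spaces, pass to a subsequence on which $L_i \to L$ and $\dot\gamma_i(0) \to \mathbf{u}$, $\dot\eta_i(0) \to \mathbf{v}$ for some unit vectors $\mathbf{u}, \mathbf{v} \in T_p M$. The $C^2$ convergence $\mathfrak{g}_i \to \mathfrak{g}$ brings Christoffel symbols together in $C^0$, and standard continuous dependence of ODE solutions on coefficients and initial data yields $\gamma_i \to \gamma$, $\eta_i \to \eta$ uniformly on $[0, L]$, where $\gamma, \eta$ are the unit-speed $\mathfrak{g}$-geodesics from $p$ with initial velocities $\mathbf{u}, \mathbf{v}$. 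By \autoref{lemma:distanceConvergence}, $L = d_{\mathfrak{g}}(p, q)$, so both $\gamma, \eta$ are $\mathfrak{g}$-minimizers from $p$ to $q$. If $\mathbf{u} \neq \mathbf{v}$, then $q \in \mathrm{Se}(p, \mathfrak{g}) \subset \mathrm{Cu}(p, \mathfrak{g})$, contradicting our assumption. If $\mathbf{u} = \mathbf{v}$, then since $q \notin \mathrm{Cu}(p, \mathfrak{g})$ the geodesic $\gamma|_{[0, L]}$ carries no conjugate point and $\exp_p^{\mathfrak{g}}$ is a local diffeomorphism at $L\mathbf{u}$; the $C^2$ perturbation keeps $\exp_{p_i}^{\mathfrak{g}_i}$ locally invertible on a fixed neighborhood of $L\mathbf{u}$ for large $i$, forcing $L_i \dot\gamma_i(0) = L_i \dot\eta_i(0)$ and hence $\gamma_i = \eta_i$, again a contradiction.

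The main obstacle I expect is precisely the coincidence case $\mathbf{u} = \mathbf{v}$ in step (b): nothing a priori prevents the initial velocities of the two separating geodesics from collapsing onto a common limit vector, and ruling this out uses the full strength of the $C^2$ hypothesis. Indeed, $C^2$ proximity of metrics is what propagates to $C^0$ stability of the Jacobi equation, and thus of the nondegeneracy of $d_v \exp_p^{\mathfrak{g}}$ at noncut tangent vectors $v$; this is what allows local invertibility of the exponential map to transfer from $\mathfrak{g}$ to nearby $\mathfrak{g}_i$. Weaker $C^0$ or $C^1$ convergence would not suffice, mirroring the subtlety already visible in \autoref{thm:sakaiInjectivityRadiusConv}.
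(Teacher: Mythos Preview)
Your proposal is correct and follows the paper's strategy: part (a) reproduces \autoref{lemma:hausdorffConv1} essentially verbatim, and part (b) matches \autoref{lemma:hausdorffConv2}. The only notable difference is in the coincidence case $\mathbf{u}=\mathbf{v}$ of part (b): the paper builds auxiliary maps $\Phi_i$ by composing inverse exponentials at radius $\delta$, a rescaling, and exponentials at radius $L_i$, and then proves these are embeddings for large $i$ via a Taylor-remainder estimate together with Sakai's $C^1$ convergence of geodesic flows; your direct appeal to uniform local injectivity of $\exp_{p_i}^{\mathfrak{g}_i}$ near $L\mathbf{u}$ packages the same $C^1$-convergence ingredient more concisely, at the cost of leaving implicit the quantitative step (a mean-value/Taylor bound) that upgrades pointwise nondegeneracy of $d\exp$ to injectivity on a \emph{fixed} neighborhood independent of $i$.
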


Since we are dealing with metric spaces, it is enough to prove the sequential continuity. Suppose $(p_i, \mathfrak{g}_i) \in M \times \mathcal{G}$ converges to $(p, \mathfrak{g})$. We shall require the following two lemmas.

\begin{lemma}\label{lemma:hausdorffConv1}
    Given any $x\in \mathrm{Cut}(p, \mathfrak{g})$, there exists a sequence $x_i \in \mathrm{Cut}(p_i, \mathfrak{g}_i)$ with $\lim_i x_i = x$.
\end{lemma}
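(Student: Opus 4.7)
Since $\mathrm{Cu}(p, \mathfrak{g}) = \overline{\mathrm{Se}(p, \mathfrak{g})}$ by \autoref{eq:sepIsDenseInCut}, a standard diagonal argument reduces the problem to the case $x \in \mathrm{Se}(p, \mathfrak{g})$. The plan is to transfer the viscosity-theoretic characterization of separating points from \autoref{prop:characterizationOfSepPoint} across the $C^2$-perturbation, by thinking of $p$ as the ``center'' of a small geodesic sphere.

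First, I would pick $\delta > 0$ with $2\delta < \min\{d_{\mathfrak{g}}(p, x),\, \mathrm{Inj}(\mathfrak{g})\}$ and set $\Omega \coloneqq M \setminus \overline{B_{\mathfrak{g}}(p, \delta)}$, so that $\partial \Omega = S_{\mathfrak{g}}(p, \delta)$ and $u(y) \coloneqq d_{\mathfrak{g}}(\partial\Omega, y) = d_{\mathfrak{g}}(p, y) - \delta$ on $\Omega$. Since $\delta < \mathrm{Inj}(\mathfrak{g})$, every $\partial\Omega$-minimizer inside $\Omega$ extends uniquely to a $\{p\}$-minimizer, so $\mathrm{Se}^{\mathsf{in}}(\Omega, \mathfrak{g}) = \mathrm{Se}(p, \mathfrak{g}) \cap \Omega$ (cf. \autoref{eq:cutLocusOfSphere}), whence $x \in \mathrm{Se}^{\mathsf{in}}(\Omega, \mathfrak{g})$. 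Applying \autoref{prop:characterizationOfSepPoint} together with the strict-extremum trick of \autoref{rmk:strictExtrema} yields a $C^1$ function $\varphi$ on some open neighborhood $U$ of $x$ with $\overline{U} \subset \Omega$, such that $u - \varphi$ attains a \emph{strict} local maximum at $x$, and $\|d_x \varphi\|_{\mathfrak{g}} < 1 - 2\eta$ for some fixed $\eta \in (0, 1/2)$.

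Next, I would carry this data over to the perturbed metrics. By \autoref{thm:sakaiInjectivityRadiusConv} and \autoref{lemma:distanceConvergence}, for all sufficiently large $i$ one has $\delta < \mathrm{Inj}(\mathfrak{g}_i)$ and $d_{\mathfrak{g}_i}(p_i, x) > \delta$; in particular $x \in \Omega_i \coloneqq M \setminus \overline{B_{\mathfrak{g}_i}(p_i, \delta)}$, and $u_i(y) \coloneqq d_{\mathfrak{g}_i}(p_i, y) - \delta$ equals $d_{\mathfrak{g}_i}(\partial\Omega_i, y)$ on $\Omega_i$. By \autoref{lemma:uniformConv}, $u_i \Rightarrow u$ uniformly on $M$. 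The argument used in the proof of \autoref{thm:stabilityViscositySolution} (strict maximum plus uniform convergence) then produces a sequence $x_i \in U$ with $x_i \to x$ at which $u_i - \varphi$ attains a local maximum. Moreover, by \autoref{lemma:normConvergence} and continuity of $d\varphi$ on the compact set $\overline{U}$, we have $\|d_{x_i}\varphi\|_{\mathfrak{g}_i} \to \|d_x \varphi\|_{\mathfrak{g}}$, hence $\|d_{x_i}\varphi\|_{\mathfrak{g}_i} < 1 - \eta < 1$ for all large $i$.

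Finally, applying \autoref{prop:characterizationOfSepPoint} in the reverse direction to $(\Omega_i, \mathfrak{g}_i)$ gives $x_i \in \mathrm{Se}^{\mathsf{in}}(\Omega_i, \mathfrak{g}_i) = \mathrm{Se}(p_i, \mathfrak{g}_i) \cap \Omega_i \subset \mathrm{Cu}(p_i, \mathfrak{g}_i)$, producing the required approximating sequence. The main obstacle is the simultaneous shift: one must both relocate the local maximum from $x$ to some nearby $x_i$ and preserve the strict inequality $\|d\varphi\| < 1$ when passing from $\mathfrak{g}$ to $\mathfrak{g}_i$. The strict-maximum trick, the fixed gap $\eta$, and uniform convergence of both $u_i$ and $\|\cdot\|_{\mathfrak{g}_i}$ on the compact neighborhood $\overline{U}$ resolve these two issues together; the rest is bookkeeping to ensure $x_i$ lies in the correct open set $\Omega_i$, which is where the $C^2$ hypothesis enters through the continuity of the injectivity radius.
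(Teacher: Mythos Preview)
Your proposal is correct and follows essentially the same route as the paper's own proof: reduce to $x\in\mathrm{Se}(p,\mathfrak{g})$, replace $p$ by the geodesic sphere $S_{\mathfrak{g}}(p,\delta)$ so that \autoref{prop:characterizationOfSepPoint} applies, use the strict-maximum trick plus uniform convergence $u_i\Rightarrow u$ to relocate the maximizer, and invoke norm convergence to carry the strict inequality $\|d\varphi\|<1$ across to $\mathfrak{g}_i$. Your bookkeeping (the explicit gap $\eta$ and the check that $x_i\in\Omega_i$) is slightly more careful than the paper's, but the argument is the same.
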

\begin{proof}
    Since $\mathrm{Cut}(p, \mathfrak{g}) = \overline{\mathrm{Sep}(p, \mathfrak{g})}$, without loss of generality we may assume that $x \in \mathrm{Sep}(p, \mathfrak{g})$. As $\lim_i \mathrm{Inj}(\mathfrak{g}_i) = \mathrm{Inj}(\mathfrak{g})$ (\autoref{thm:sakaiInjectivityRadiusConv}), we can fix some $\delta > 0$ such that $\delta < \mathrm{Inj}(\mathfrak{g}_i)$ for all $i$ and $\delta < \mathrm{Inj}(\mathfrak{g})$. Without loss of generality, we assume $d_{\mathfrak{g}}(p, p_i) < \delta$ for $i \ge 1$ as well. Denote the sets
    \[\Omega_i \coloneqq \left\{ x \in M \;\middle|\; d_{\mathfrak{g}_i}(p_i, x) > \delta \right\}, \quad \Omega \coloneqq \left\{ x \in M \;\middle|\; d_{\mathfrak{g}}(p, x) > \delta \right\},\]
    and the distance functions 
    \[u_i \coloneqq d_{\mathfrak{g}_i}\left( \partial \Omega_i, \_ \right) = d_{\mathfrak{g}_i}(p_i, \_) - \delta, \quad u \coloneqq d_{\mathfrak{g}}\left( \partial \Omega, \_ \right) = d_{\mathfrak{g}}(p, \_) - \delta.\]
    It follows from \autoref{lemma:uniformConv} that $u_i \rightarrow u$ uniformly. Also, note that by \autoref{eq:cutLocusOfSphere} we have 
    \[\mathrm{Cut}^{\mathsf{in}}(\Omega_i, \mathfrak{g}_i) = \mathrm{Cut}(p_i, \mathfrak{g}_i), \quad \mathrm{Cut}^{\mathsf{in}}(\Omega, \mathfrak{g}) = \mathrm{Cut}(p, \mathfrak{g}),\]
    and similar relations hold for $\mathrm{Sep}(\cdot)$ as well. Then, by \autoref{prop:characterizationOfSepPoint}, we have a $C^1$ function $\varphi$ defined on a neighborhood $x \in U$ such that, $u - \varphi$ attains a local maximum at $x$ and 
    \[\left\lVert d_x \varphi \right\rVert _{\mathfrak{g}} < 1.\]
    By \autoref{rmk:strictExtrema}, we assume that $u- \varphi$ attains a \emph{strict} local maximum at $x_0$. Now, $u_i \rightarrow u$ uniformly. Hence, as in the proof of \autoref{thm:stabilityViscositySolution}, we have a sequence $x_i \in U$ such that $\lim x_i = x$ and $u_i - \varphi$ attains a local maximum at $x_i$. By \autoref{thm:uniqueViscositySolution}, $u_i$ is a viscosity sub-solution for the eikonal equation $\left\lVert df \right\rVert _{\mathfrak{g}_i} = 1$ in $\Omega_i$. Hence, we have $\left\lVert d \varphi \right\rVert _{\mathfrak{g}_i} \le 1$ at $x_i$. As $\varphi$ is $C^1$, from the convergence of the norm, we have
    \[\lim_i \left\lVert d_{x_i} \varphi \right\rVert _{\mathfrak{g}_i} = \left\lVert d_x \varphi \right\rVert _{\mathfrak{g}} < 1.\]
    Hence, for $i$ sufficiently large, we must have $\left\lVert d_{x_i} \varphi \right\rVert _{\mathfrak{g}_i} < 1$. But then again by \autoref{prop:characterizationOfSepPoint}, we have $x_i \in \mathrm{Sep}(p_i, \mathfrak{g}_i)$.  Hence, the claim follows.
\end{proof}

\begin{lemma}\label{lemma:hausdorffConv2}
    Suppose $x_{i_j} \in \mathrm{Cut}(p_{i_j}, \mathfrak{g}_{i_j})$ is a convergent sequence with $x \coloneqq \lim_j x_{i_j}$. Then, $x \in \mathrm{Cut}(p, \mathfrak{g})$.
\end{lemma}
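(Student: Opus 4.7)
The plan is to prove this directly (no contradiction needed). Since $\mathrm{Cu}(p_{i_j}, \mathfrak{g}_{i_j}) = \overline{\mathrm{Se}(p_{i_j}, \mathfrak{g}_{i_j})}$, a standard diagonal argument (picking a point of $\mathrm{Se}(p_{i_j}, \mathfrak{g}_{i_j})$ within $d_{\mathfrak{h}}$-distance $1/j$ of $x_{i_j}$) lets us assume without loss of generality that $x_{i_j} \in \mathrm{Se}(p_{i_j}, \mathfrak{g}_{i_j})$. For each $j$, I pick two distinct $\mathfrak{g}_{i_j}$-minimizers $\gamma^k_{i_j} \colon [0, L_{i_j}] \to M$, $k \in \{1, 2\}$, from $p_{i_j}$ to $x_{i_j}$, where $L_{i_j} = d_{\mathfrak{g}_{i_j}}(p_{i_j}, x_{i_j}) \to L := d_{\mathfrak{g}}(p, x)$ by \autoref{lemma:distanceConvergence}. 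Let $\mathbf{v}^k_{i_j} := \dot{\gamma}^k_{i_j}(0) \in T_{p_{i_j}} M$ denote the corresponding distinct $\mathfrak{g}_{i_j}$-unit initial velocities.

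By \autoref{lemma:normConvergence} together with a local trivialization of $TM$ around $p$, the vectors $\mathbf{v}^k_{i_j}$ remain in a bounded region of $TM$, so after passing to a subsequence $\mathbf{v}^k_{i_j} \to \mathbf{v}^k \in T_p M$ with $\|\mathbf{v}^k\|_{\mathfrak{g}} = 1$. Here the $C^2$-convergence of the metrics is essential: it yields $C^1$-convergence of the Christoffel symbols, so by standard continuous dependence of ODE solutions on parameters and initial data, the geodesics $\gamma^{\mathfrak{g}_{i_j}}_{\mathbf{v}^k_{i_j}}$ converge to $\gamma^{\mathfrak{g}}_{\mathbf{v}^k}$ uniformly on bounded time intervals, with analogous $C^1$-convergence of the exponential maps $\exp^{\mathfrak{g}_{i_j}}_{p_{i_j}}$ to $\exp^{\mathfrak{g}}_{p}$ on compact subsets. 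Combined with $L_{i_j} \to L$ and $\gamma^{\mathfrak{g}_{i_j}}_{\mathbf{v}^k_{i_j}}(L_{i_j}) = x_{i_j} \to x$, this gives $\gamma^{\mathfrak{g}}_{\mathbf{v}^k}(L) = x$, so each $\gamma^{\mathfrak{g}}_{\mathbf{v}^k}|_{[0, L]}$ is a $\mathfrak{g}$-geodesic from $p$ to $x$ of length $L = d_{\mathfrak{g}}(p, x)$ and is therefore a $\mathfrak{g}$-minimizer.

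The argument now splits into two cases. If $\mathbf{v}^1 \neq \mathbf{v}^2$, then $\gamma^{\mathfrak{g}}_{\mathbf{v}^1}|_{[0, L]}$ and $\gamma^{\mathfrak{g}}_{\mathbf{v}^2}|_{[0, L]}$ are two distinct $\mathfrak{g}$-minimizers from $p$ to $x$, immediately giving $x \in \mathrm{Se}(p, \mathfrak{g}) \subset \mathrm{Cu}(p, \mathfrak{g})$. The main obstacle is the degenerate collapse $\mathbf{v}^1 = \mathbf{v}^2 =: \mathbf{v}$, where the two sequences of minimizers merge in the limit. To handle it, observe that the vectors $L_{i_j} \mathbf{v}^1_{i_j}$ and $L_{i_j} \mathbf{v}^2_{i_j}$ in $T_{p_{i_j}} M$ are distinct (since $L_{i_j} > 0$ and $\mathbf{v}^1_{i_j} \neq \mathbf{v}^2_{i_j}$) yet both converge to $L\mathbf{v} \in T_p M$, while each is mapped to $x_{i_j}$ by $\exp^{\mathfrak{g}_{i_j}}_{p_{i_j}}$. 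Were $\exp^{\mathfrak{g}}_p$ a local diffeomorphism at $L\mathbf{v}$, then by $C^1$-convergence of the exponential maps and the inverse function theorem (openness of the invertible-derivative condition), $\exp^{\mathfrak{g}_{i_j}}_{p_{i_j}}$ would be injective on a fixed neighborhood of $L\mathbf{v}$ for all sufficiently large $j$, contradicting the two distinct preimages. Hence $d\exp^{\mathfrak{g}}_p$ is singular at $L\mathbf{v}$, i.e., $x$ is conjugate to $p$ along the $\mathfrak{g}$-minimizer $\gamma^{\mathfrak{g}}_{\mathbf{v}}|_{[0, L]}$. Since a conjugate point along a minimizing geodesic is necessarily a cut point (the cut time is bounded above by the first conjugate time and below by $L$, so both equal $L$), we conclude $x \in \mathrm{Cu}(p, \mathfrak{g})$, as desired.
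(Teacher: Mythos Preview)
Your proof is correct and takes a genuinely different route from the paper's. The paper argues by contradiction: assuming $x \notin \mathrm{Cu}(p,\mathfrak{g})$, it fixes a radius $\delta$ below all the injectivity radii, builds comparison maps $\Phi_i : U \to V$ that push a neighborhood on the $\delta$-sphere out to a neighborhood of $x$ via $\exp_{p_i}^{\mathfrak{g}_i}$, and then spends most of its effort showing (through a Taylor-remainder argument and Sakai's $C^1$-convergence of geodesic flows) that $\Phi_i$ is an embedding for $i$ large; the two distinct minimizers $\gamma_i^1,\gamma_i^2$ then force a contradiction with either the injectivity of $\Phi_i$ or the uniqueness of the limiting minimizer. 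You instead invoke directly the classical dichotomy ``cut point $=$ separating point or first conjugate point along a minimizer'': after passing to limits $\mathbf{v}^1,\mathbf{v}^2$ of the initial velocities, either they differ (giving $x\in\mathrm{Se}(p,\mathfrak{g})$) or they coincide, in which case a quantitative inverse function theorem applied to the $C^1$-convergent family $\exp_{p_{i_j}}^{\mathfrak{g}_{i_j}}$ forces $d\exp_p^{\mathfrak{g}}|_{L\mathbf{v}}$ to be singular. Both arguments rest on the same analytic input---$C^2$-convergence of metrics yields $C^1$-convergence of the exponential maps---but your casewise treatment is shorter, avoids the auxiliary $\delta$-sphere construction and the appeal to $\lim_i\mathrm{Inj}(\mathfrak{g}_i)=\mathrm{Inj}(\mathfrak{g})$, and makes the role of conjugate points explicit; the paper's version, by contrast, is self-contained in that it never names the conjugate-point characterization of $\mathrm{Cu}$, packaging that information implicitly into the embedding property of $\Phi_i$. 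One small point worth tightening in your write-up: the phrase ``openness of the invertible-derivative condition'' only gives that each $\exp_{p_{i_j}}^{\mathfrak{g}_{i_j}}$ is a local diffeomorphism \emph{near} $L\mathbf{v}$; what you actually need (and what does follow from $C^1$-convergence via the standard mean-value estimate) is injectivity on a \emph{uniform} neighborhood independent of $j$, so that both preimages $L_{i_j}\mathbf{v}^1_{i_j},\,L_{i_j}\mathbf{v}^2_{i_j}$ eventually lie in it.
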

\begin{proof}
    Firstly, to keep the notation light, we assume that $x_i \in \mathrm{Cut}(p_i, \mathfrak{g}_i)$. Next, in view of \autoref{eq:sepIsDenseInCut}, without loss of generality, we may assume that $x_i \in \mathrm{Sep}(p_i, \mathfrak{g}_i)$ so that $x \coloneqq \lim x_i$. We now proceed by contradiction and assume that $x \not \in \mathrm{Cut}(p, \mathfrak{g})$. 

    Set $L_i = d_{\mathfrak{g}_i}(p_i, x_i)$ and $L = d_{\mathfrak{g}}(p, x)$. From \autoref{lemma:distanceConvergence}, we have $L = \lim L_i$. Let $\gamma_{\mathbf{v}}^{\mathfrak{g}} : [0, L] \rightarrow M$ be the \emph{unique} $\mathfrak{g}$-minimizer joining $p$ to $x$, where $\mathbf{v} \in T_p M$ satisfies $\left\lVert \mathbf{v} \right\rVert _{\mathfrak{g}} = 1$. Since $x_i \in \mathrm{Sep}(p_i, \mathfrak{g}_i)$, we have $\mathbf{v}_i^1 \ne \mathbf{v}_i^2 \in T_{p_i} M$ with $\left\lVert \mathbf{v}_i^1 \right\rVert _{\mathfrak{g}_i} = 1 = \left\lVert \mathbf{v}_i^2 \right\rVert _{\mathfrak{g}_i}$, such that for $j=1,2$, the $\mathfrak{g}_i$-geodesics 
    \[\gamma_i^j \coloneqq \gamma_{\mathbf{v}_i^j}^{\mathfrak{g}_i}: [0, L_i] \rightarrow M\]
    are distinct $\mathfrak{g}_i$-minimizers joining $p_i$ to $x_i$. By a standard Arzel\`{a}-Ascoli type argument (see e.g. \cite[Theorem 5.16]{busemannBook}), passing to a subsequence, we have $\gamma_i^1 \rightarrow \eta$ uniformly, where $\eta : [0, L] \rightarrow M$ is a Lipschitz curve joining $p$ to $x$. But $L_{\mathfrak{g}}(\eta) = \lim L_{\mathfrak{g}_i}(\gamma_i^1) = \lim L_i = L$. Hence, $\eta=\gamma_{\mathbf{v}}^{\mathfrak{g}}$ and $\lim \mathbf{v}_i^1 = \mathbf{v}$. Note that since $x \not \in \mathrm{Cut}(p, \mathfrak{g})$, a sufficiently small extension of $\eta$ is still the unique minimizer joining $p$ to $x$.

    By \autoref{thm:sakaiInjectivityRadiusConv}, we have $\lim \mathrm{Inj}(\mathfrak{g}_i) = \mathrm{Inj}(\mathfrak{g}) > 0$. Hence, we may fix $\delta > 0$ small such that the following holds for $i$ sufficiently large:
    \[\delta < \frac{1}{2}\mathrm{Inj}(\mathfrak{g}_i), \quad \delta < \frac{1}{2}\mathrm{Inj}(\mathfrak{g}), \quad d_{\mathfrak{g}}(p, p_i) < \frac{\delta}{2}.\]
    In particular, the sets $S_i \coloneqq \left\{ z \in M \;\middle|\; d_{\mathfrak{g}_i}(p_i, z) = \delta \right\}$ and $S \coloneqq \left\{ z \in M \;\middle|\; d_{\mathfrak{g}}(p, z) = \delta \right\}$ are embedded submanifolds, diffeomorphic to spheres of dimension $\dim M - 1$. Set $q = \eta(\delta) = \gamma_{\mathbf{v}}^{\mathfrak{g}}(\delta) \in S$. We have neighborhoods $q \in U \subset M$ and $\delta\mathbf{v} \in \tilde{U} \subset T_p M$, such that 
    \[\exp_p^{\mathfrak{g}} : \tilde{U} \rightarrow U\]
    is a diffeomorphism. We assume that $\bar{U} \subset \left\{ z \in M \;\middle|\; d_{\mathfrak{g}}(q, z) < \frac{\delta}{2}\right\}$, and so, $\bar{U}$ is compact. Set $\tilde{V} \coloneqq \left\{ \frac{L}{\delta} \mathbf{u} \;\middle|\; \mathbf{u} \in \tilde{U }\right\}$, which is a neighborhood of $L\mathbf{v}$. Shrinking $\tilde{V}$ (and hence $\tilde{U}$ and $U$ as necessary), we have $\exp_p^{\mathfrak{g}}|_{\tilde{V}} : \tilde{V} \rightarrow V$ is a diffeomorphism, where $V$ is a neighborhood of $x$. This is possible because $x \not \in \mathrm{Cut}(p, \mathfrak{g})$. Thus, we have defined a diffeomorphism $\Phi : U \rightarrow V$ as the composition of three diffeomorphisms
    \begin{equation}\label{eq:definitionPhi}
        \begin{tikzcd}
            \Phi : U \arrow{rr}{\left( \exp_p^{\mathfrak{g}}\middle|_{\tilde{U}} \right)^{-1}} && \tilde{U} \arrow{rr}{\mathbf{u} \, \longmapsto \frac{L}{\delta}\mathbf{u}} && \tilde{V} \arrow{rr}{\left .\exp_p^{\mathfrak{g}}\right|_{\tilde{V}}} && V,
        \end{tikzcd}
    \end{equation}
    which maps a neighborhood of $q$ to a neighborhood of $x$. Next, set $q_i \coloneqq \gamma_i^1(\delta)$. Since $\lim q_i = \lim \gamma_i^1(\delta) = \eta(\delta) = q$ and $x = \lim x_i$, we have $x_i \in V$ and $q_i \in U$ for $i$ large. As $q_i \not \in \mathrm{Cut}(p_i, \mathfrak{g}_i)$, we can define $\Phi_i : U \rightarrow M$ as the compositions
    \begin{equation}\label{eq:definitionPhi_i}
        \begin{tikzcd}
            \Phi_i : U \arrow{rr}{\left( \exp_{p_i}^{\mathfrak{g}_i}\middle|_{\tilde{U}_i} \right)^{-1}} && \tilde{U}_i \arrow{rr}{\mathbf{u} \, \longmapsto \frac{L_i}{\delta}\mathbf{u}} && \tilde{V}_i \arrow{rr}{\left.\exp_{p_i}^{\mathfrak{g}_i}\right|_{\tilde{V}_i}} && M,
        \end{tikzcd}
    \end{equation}
    where the first two maps are diffeomorphisms onto the image. Clearly, $\mathbf{v}_i^1 \in \tilde{U}_i$ and $\Phi_i(q_i) = x_i$. Let us fix neighborhood $q \in W \subset U$ with $\bar{W} \subset U$, such that $q_i \in W$. Note that $\bar{W}$ is compact, as $\bar{U}$ is so. \medskip
    
    Firstly, we show that for $i$ sufficiently large, $\Phi_i$ maps $W$ in to $V$ (not-necessarily diffeomorphically). Suppose this is not the case. Then, we have a sequence, $x_{i_j} \in W, \mathbf{u}_{i_j} \in \tilde{U}_{i_j}$, with $\exp_{p_{i_j}}^{\mathfrak{g}_{i_j}}(\mathbf{u}_{i_j}) = x_{i_j}$, such that $y_{i_j} \coloneqq \Phi_{i_j}(x_{i_j}) \not \in V$. Passing to a further subsequence, $x_{i_j} \rightarrow x \in \bar{W} \subset U$. Let $\mathbf{u} \in \tilde{U}$ so that $\exp_p^{\mathfrak{g}}(\mathbf{u}) = x$. It follows that the minimizers $\gamma_{\mathbf{u}_{i_j}}^{\mathfrak{g}_{i_j}}$ joining $p_{i_j}$ to $x_{i_j}$ must converge, possibly passing to a subsequence, to the unique minimizer $\gamma_{\mathbf{u}}^{\mathfrak{g}}$ joining $p$ to $x$. Consequently, $\mathbf{u} = \lim \mathbf{u}_{i_j}$. Now, we have 
    \[y_{i_j} = \Phi_{i_j}(x_{i_j}) = \exp_{p_{i_j}}^{\mathfrak{g}_{i_j}}\left( \frac{L_i}{\delta} \mathbf{u}_{i_j}\right) = \gamma_{\mathbf{u}_{i_j}}^{\mathfrak{g}_{i_j}}\left( \frac{L_i}{\delta} \mathbf{u}_{i_j} \right) \rightarrow \gamma_{\mathbf{u}}^{\mathfrak{g}}\left( \frac{L}{\delta} \right) = \exp_p^{\mathfrak{g}}\left( \frac{L}{\delta} \mathbf{u} \right) = \Phi(x),\]
    which contradicts $y_{i_j} \not \in V$, as $\lim_j y_{i_j} = \Phi(x) \in V$.  Hence, for $i$ sufficiently large, we get the maps $\Phi_i : W \rightarrow V$.

    Next, we claim that for $i$ even larger, $\Phi_i : W \rightarrow V$ is an embedding. Consider the maps 
    \[\begin{aligned} \mathsf{h}_i : \tilde{U}_i &\rightarrow V\\
    \mathbf{u} &\mapsto \exp^{\mathfrak{g}_i}_{p_i}\left( \frac{L_i}{\delta} \mathbf{u} \right) \end{aligned}, \quad \begin{aligned}  \mathsf{h} : \tilde{U} &\rightarrow V \\
        \mathbf{u} &\mapsto \exp^{\mathfrak{g}}_{p}\left( \frac{L}{\delta} \mathbf{u} \right) \end{aligned},\]
    so that $\Phi_i = \mathsf{h}_i \circ \left( \exp_{p_i}^{\mathfrak{g}_i}|_{\tilde{U}_i} \right)^{-1}$ and $\Phi = \mathsf{h} \circ \left( \exp_p^{\mathfrak{g}}|_{\tilde{U}} \right)^{-1}$. Recall that $\mathsf{h}$ is a diffeomorphism. Assume, for the sake of contradiction, that for infinitely many $i$, we have $a_i^1, a_i^2 \in W$ such that $\Phi_i(a_i^1) = \Phi_i(a_i^2)$. We have $\mathbf{u}_i^1 \ne \mathbf{u}_i^2 \in \tilde{U}_i$ so that $\exp_{p_i}^{\mathfrak{g}_i}(\mathbf{u}_i^j) = a_i^j$ for $j=1,2$. Passing to a subsequence, $j = 1, 2$, we have $a_i^j \rightarrow a^j \in \bar{W} \subset U$, and hence, $\mathbf{u}^j \in \tilde{U}$ so that $\exp_p^{\mathfrak{g}}(\mathbf{u}^j) = a^j$. Arguing as in the previous paragraph, we see that, possibly for a subsequence, $\mathbf{u}_i^j \rightarrow \mathbf{u}^j$ for $j = 1,2$. Note that, since $\gamma_{\mathbf{u}_i^1}^{\mathfrak{g}_i}\left( \frac{L_{i}}{\delta} \right) = \Phi_i(a_i^1) = \Phi_i(a_i^2) = \gamma_{\mathbf{u}_i^2}^{\mathfrak{g}_i}\left( \frac{L_i}{\delta} \right)$ for all $i$, we have
    \[\mathsf{h}\left( \frac{L}{\delta} \mathbf{u}^1 \right) = \gamma_{\mathbf{u}^1}^{\mathfrak{g}}\left( \frac{L}{\delta} \right) = \lim_i \gamma_{\mathbf{u}_i^1}^{\mathfrak{g}_i}\left( \frac{L_i}{\delta} \right) = \lim_i \gamma_{\mathbf{u}_i^2}^{\mathfrak{g}_i}\left( \frac{L_i}{\delta} \right) = \gamma_{\mathbf{u}^2}^{\mathfrak{g}}\left( \frac{L}{\delta} \right) = \mathsf{h}\left( \frac{L}{\delta} \mathbf{u}^2 \right).\]
    As $\mathsf{h}$ is a diffeomorphism, we have $\mathbf{u}^1 = \mathbf{u}^2 = \mathbf{u} \in \tilde{U}$. Set $\mathbf{e}_i \coloneqq \frac{\mathbf{u}_i^2 - \mathbf{u}_i^1}{ \left\lVert \mathbf{u}_i^2 - \mathbf{u}_i^1 \right\rVert _{\mathfrak{g}_i} }$, so that $\left\lVert \mathbf{e}_i \right\rVert _{\mathfrak{g}_i} = 1$. Passing to a subsequence, we have $\mathbf{e}_i \rightarrow \mathbf{e} \in T_p M$. In particular, $\left\lVert \mathbf{e} \right\rVert _{\mathfrak{g}} = 1$, and so $\mathbf{e} \ne 0$. Without loss of generality, we can assume a priori that $V$ is a coordinate chart, with $\varphi : V \rightarrow \mathbb{R}^{n = \dim M}$ a coordinate map. Then, by the Taylor's theorem, we have 
    \[d(\varphi \circ \mathsf{h}_i)|_{\mathbf{u}_i^1}(\mathbf{u}_i^2 - \mathbf{u}_i^1) = - \int_0^1 (1-t) \left( d(\varphi \circ \mathsf{h}_i)|_{t \mathbf{u}_i^2 + (1-t)\mathbf{u}_i^1} - d(\varphi \circ \mathsf{h}_i)|_{\mathbf{u}_i^1} \right)(\mathbf{u}_i^2 - \mathbf{u}_i^1) dt.\]
    Dividing both sides by $\left\lVert \mathbf{u}_i^2 - \mathbf{u}_i^2 \right\rVert _{\mathfrak{g}_i}$ gives,
    \begin{equation}\label{eq:taylorIntegral}
        d(\varphi \circ \mathsf{h}_i)|_{\mathbf{u}_i^1}(\mathbf{e}_i) = - \int_0^1 (1-t) \left( d(\varphi \circ \mathsf{h}_i)|_{t \mathbf{u}_i^2 + (1-t)\mathbf{u}_i^1} - d(\varphi \circ \mathsf{h}_i)|_{\mathbf{u}_i^1} \right)(\mathbf{e}_i) dt.
    \end{equation}
    If we denote the geodesic flows associated to $\mathfrak{g}_i, \mathfrak{g}$ respectively as
    \[\mathfrak{G}_i^t : TM \rightarrow TM, \quad \mathfrak{G}^t :TM \rightarrow TM,\]
    it is easy to see that 
    \[\mathsf{h}_i = \pi \circ \mathfrak{G}_i^{\frac{L_i}{\delta}}|_{\tilde{U}_i}, \quad \mathsf{h} = \pi \circ \mathfrak{G}^{\frac{L}{\delta}}|_{\tilde{U}},\]
    where $\pi : TM \rightarrow M$ is the projection. Now, it follows from \cite[Lemma 1.6]{sakaiInjectivityCont} that $\mathfrak{G}_i^{\frac{L_i}{\delta}}\rightarrow \mathfrak{G}^{\frac{L}{\delta}}$ in the $C^1$ topology. This implies that the integrand in \autoref{eq:taylorIntegral} converges uniformly to the limit, as we can find a $C^1$ bound for $\pi$ and $\varphi$. Hence, taking $i \rightarrow \infty$ we get,
    \[d(\varphi \circ \mathsf{h})|_{\mathbf{u}}(\mathbf{e}) = - \int_0^1 (1-t)\left( d(\varphi\circ \mathsf{h})|_{\mathbf{u}} - d(\varphi \circ \mathsf{h})|_{\mathbf{u}} \right)(\mathbf{e}) dt = 0.\]
    But this contradicts the fact that $\mathsf{h}$ is a diffeomorphism. Hence, for $i$ large enough, we must have $\Phi_i : W \rightarrow V$ is injective. From \autoref{eq:definitionPhi_i}, it then follows that $\left. \exp^{\mathfrak{g}_i}_{p_i}\right|_{\tilde{W}_i}$ is injective, where $\tilde{W}_i$ is the image of $W$ under the first two diffeomorphisms. By \cite[Theorem 3.4]{warnerConjugateLocus}, we get $\left. \exp^{\mathfrak{g}_i}_{p_i}\right|_{\tilde{W}_i}$ has no critical point, whence it is a diffeomorphism. Thus,  $\Phi_i : W \rightarrow V$ is an embedding for $i$ large.\medskip

    Now, consider the points $q_i^\prime \coloneqq \gamma_i^2(\delta)$. Clearly $q_i^\prime \ne q_i$, as the geodesics $\gamma_i^1, \gamma_i^2$ intersects at time $L_i$ for the first time, and $L_i \ge \mathrm{Inj}(\mathfrak{g}_i) > \delta$. If $q_i^\prime \in W$, then $q_i^\prime \not \in \mathrm{Cut}(p_i, \mathfrak{g}_i)$ and clearly, $\left( \exp^{\mathfrak{g}_i}_{p_i}|_{\tilde{U}_i} \right)^{-1}(q_i^\prime) = \delta\mathbf{v}_i^2$. Consequently, $\Phi_i(q_i^\prime) = \exp_{p_i}^{\mathfrak{g}_i}(L_i \mathbf{v}_i^2) = \gamma_i^2(L_i) = x_i = \Phi_i(q_i)$, contradicting the injectivity of $\Phi_i$. Thus, $q_i^\prime \not \in W$ for $i$ large. On the other hand, by a Busemann type argument as above, we must have a subsequence of $\gamma_i^2$ converging to a curve joining $p$ to $x$, which is necessarily the unique minimizer $\eta = \gamma_{\mathbf{v}}^{\mathfrak{g}}$. This is a contradiction as $\gamma_i^2(\delta)$ cannot converge to $\eta(\delta)$. Hence, $x \in \mathrm{Cut}(p, \mathfrak{g})$, proving the claim.
\end{proof}

\begin{proof}[Proof of \autoref{thm:cutLocusMapContinuous}]
    For any sequence $(p_i, \mathfrak{g}_i) \in M \times \mathcal{G}$ converging to $(p, \mathfrak{g})$, let us denote $\mathcal{X}_i \coloneqq \mathrm{Cut}(p_i, \mathfrak{g}_i)$ and $\mathcal{X} \coloneqq \mathrm{Cut}(p, \mathfrak{g})$. We apply \autoref{lemma:hausdorffConvEquivalence} to show that $\lim d_H(\mathcal{X}_i, \mathcal{X}) = 0$. Clearly, \autoref{lemma:hausdorffConv1} justifies \autoref{lemma:hausdorffConvEquivalence} (\hyperref[lemma:hausdorffConvEquivalence:1]{1}), whereas \autoref{lemma:hausdorffConvEquivalence} (\hyperref[lemma:hausdorffConvEquivalence:2]{2}) follows from \autoref{lemma:hausdorffConv2}. This concludes the proof.
\end{proof}

We finish by observing an immediate corollary to \autoref{lemma:hausdorffConv2}. Recall that for a fixed point $p \in M$, the continuity of the cut time map $\mathbf{v} \mapsto \rho_{\mathfrak{g}}(p, \mathbf{v})$ is well known \cite[Proposition 4.1]{sakaiBook}. More generally, for a closed submanifold $N \subset M$, the cut time map $\mathbf{n} \mapsto \rho_{\mathfrak{g}}(N, \mathbf{n})$ is also continuous, where $\mathbf{n}$ varies in the normal bundle of $N$ \cite{basuPrasadCutLocus,bhowmickPrasadCutLocus}. We have the following, which improves up on the result from \cite{sakaiBook} for $M$ compact.

\begin{corollary}\label{cor:contCutTime}
    Suppose $\mathfrak{g}_i$ is a sequence of Riemannian metrics on a compact manifold $M$, converging in the $C^2$ norm to the metric $\mathfrak{g}$. Let $\mathbf{v}_i \in T_{p_i}M$ be vectors with $\left\lVert \mathbf{v}_i \right\rVert _{\mathfrak{g}_i} = 1$, converging to $\mathbf{v} \in T_p M$ (necessarily with $\left\lVert \mathbf{v} \right\rVert _{\mathfrak{g}} = 1$), where $p = \lim p_i$. Then, $\lim \rho_{\mathfrak{g}_i}(p_i, \mathbf{v}_i) = \rho_{\mathfrak{g}}(p, \mathbf{v})$.
\end{corollary}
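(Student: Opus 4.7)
The plan is to reduce the corollary to a statement about subsequential limits, then use \autoref{lemma:hausdorffConv2} as the central tool. Write $t_i \coloneqq \rho_{\mathfrak{g}_i}(p_i, \mathbf{v}_i)$ and $t \coloneqq \rho_{\mathfrak{g}}(p, \mathbf{v})$. Since $\mathrm{Inj}(\mathfrak{g}_i) \le t_i \le \mathrm{Diam}(\mathfrak{g}_i)$, and by \autoref{thm:sakaiInjectivityRadiusConv} both bounds converge to positive finite limits, the sequence $(t_i)$ is bounded and bounded away from zero. It therefore suffices to show that every convergent subsequence has limit $t$. After relabeling, assume $t_i \to T$; the goal becomes $T = t$.

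The next step is to locate the limit of the cut points. Set $x_i \coloneqq \gamma_{\mathbf{v}_i}^{\mathfrak{g}_i}(t_i) \in \mathrm{Cu}(p_i, \mathfrak{g}_i)$. Using the $C^1$-convergence of the geodesic flows (Sakai's \cite[Lemma 1.6]{sakaiInjectivityCont}, exactly as invoked near the end of the proof of \autoref{lemma:hausdorffConv2}) combined with $\mathbf{v}_i \to \mathbf{v}$ in $TM$ and $t_i \to T$, one gets $x_i \to x \coloneqq \gamma_{\mathbf{v}}^{\mathfrak{g}}(T)$. Because $\gamma_{\mathbf{v}_i}^{\mathfrak{g}_i}|_{[0, t_i]}$ is a $\mathfrak{g}_i$-minimizer (by definition of cut time), $d_{\mathfrak{g}_i}(p_i, x_i) = t_i$, so \autoref{lemma:distanceConvergence} yields $d_{\mathfrak{g}}(p, x) = T$. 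Then \autoref{lemma:hausdorffConv2}, applied to the sequence $x_i \in \mathrm{Cu}(p_i, \mathfrak{g}_i)$, delivers $x \in \mathrm{Cu}(p, \mathfrak{g})$.

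The last step is to rule out $T \ne t$. If $T > t$, then $\gamma_{\mathbf{v}}^{\mathfrak{g}}|_{[0, T]}$ is not a minimizer by the very definition of $t$, hence $d_{\mathfrak{g}}(p, x) < T$, contradicting $d_{\mathfrak{g}}(p, x) = T$. If $T < t$, then $\gamma_{\mathbf{v}}^{\mathfrak{g}}|_{[0, T]}$ is a minimizer from $p$ to $x$ of length $T = d_{\mathfrak{g}}(p, x)$. Since $x \in \mathrm{Cu}(p, \mathfrak{g})$, there is a unit vector $\mathbf{w} \in T_p M$ with $\gamma_{\mathbf{w}}^{\mathfrak{g}}(T) = x$ and $\rho_{\mathfrak{g}}(p, \mathbf{w}) = T$. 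If $\mathbf{w} = \mathbf{v}$, this directly says $t = T < t$, absurd. If $\mathbf{w} \ne \mathbf{v}$, then $\gamma_{\mathbf{v}}^{\mathfrak{g}}|_{[0, T]}$ and $\gamma_{\mathbf{w}}^{\mathfrak{g}}|_{[0, T]}$ are two distinct minimizers to $x$, so $x \in \mathrm{Se}(p, \mathfrak{g})$; the standard broken-geodesic argument at $x$ (any extension of $\gamma_{\mathbf{v}}^{\mathfrak{g}}$ beyond $T$ can be beaten by concatenating $\gamma_{\mathbf{w}}^{\mathfrak{g}}$ with a piece of $\gamma_{\mathbf{v}}^{\mathfrak{g}}$, which has a corner at $x$ and hence is non-minimizing) forces $\rho_{\mathfrak{g}}(p, \mathbf{v}) \le T < t$, again a contradiction.

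The $T < t$ case is the one requiring the most care: it is tempting to expect $x \in \mathrm{Cu}(p, \mathfrak{g})$ together with $\gamma_{\mathbf{v}}^{\mathfrak{g}}(T) = x$ to immediately give $\rho_{\mathfrak{g}}(p, \mathbf{v}) = T$, but this fails because the cut of $\gamma_{\mathbf{v}}^{\mathfrak{g}}$ might a priori lie strictly beyond $T$ while $x$ becomes a cut point only through some other direction $\mathbf{w}$. Disentangling these two possibilities via the broken-geodesic principle is the main obstacle in the argument.
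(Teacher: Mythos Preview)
Your proof is correct and follows essentially the same route as the paper: bound the cut times via \autoref{thm:sakaiInjectivityRadiusConv}, pass to a convergent subsequence, push the cut points through the geodesic flow convergence to identify the limit as $\gamma_{\mathbf{v}}^{\mathfrak{g}}(T)$, and then invoke \autoref{lemma:hausdorffConv2}. The only difference is that the paper compresses your entire $T>t$ / $T<t$ case analysis into the single word ``Consequently''; your unpacking of that step (in particular the $\mathbf{w}=\mathbf{v}$ versus $\mathbf{w}\ne\mathbf{v}$ dichotomy and the broken-geodesic shortcut) is just an explicit verification of the standard fact that every minimizer terminating at a cut point has its cut time exactly there, which the paper takes for granted.
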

\begin{proof}
    It follows from \autoref{thm:sakaiInjectivityRadiusConv} that $\lim \mathrm{Diam}(\mathfrak{g}_i) = \mathrm{Diam}(\mathfrak{g}) < \infty$. Hence, without loss of generality, we have $\mathrm{Diam}(\mathfrak{g}_i) < K \coloneqq \mathrm{Diam}(\mathfrak{g}) + 1$ for all $i$. But then $R_i \coloneqq \rho_{\mathfrak{g}_i}(p_i, \mathbf{v}_i) < K$ as well. Also, $R_i \ge \mathrm{Inj}(\mathfrak{g}_i) > 0$. In other words, $\left\{ R_i \right\} \subset [0, K]$ is a bounded sequence in $\mathbb{R}$. Suppose, $\left\{ R_{i_j} \right\}$ is a convergent subsequence, with $R \coloneqq \lim_j R_{i_j}$. Set $x_{i_j} \coloneqq \gamma^{\mathfrak{g}_{i_j}}_{\mathbf{v}_{i_j}}(R_{i_j})$ and $x \coloneqq \gamma^{\mathfrak{g}}_{\mathbf{v}}(R)$. Clearly, $x_{i_j} \in \mathrm{Cut}(p_{i_j}, \mathfrak{g}_{i_j})$ and also, $x = \lim x_{i_j}$. But then by \autoref{lemma:hausdorffConv2}, we have $x \in \mathrm{Cut}(p, \mathfrak{g})$. Consequently, $R = \rho_{\mathfrak{g}}(p, \mathbf{v})$. The claim then follows.
\end{proof}

\section*{Acknowledgments}
The first and third authors would like to express their gratitude to Prof. P. Albano for clarifying certain points from the article \cite{albanoCutLocusStability} through email correspondence. The first author was supported by the NBHM grant no. 0204/1(5)/2022/R\&D-II/5649. The second author was partially supported by Grant-in Aid for Scientific Research (C) (No. 21K03238), Japan Science for the Promotion of Science. The third author was supported by Jilin University.

\bibliographystyle{alphaurl}
\bibliography{ref}

\end{document}